%BeginFileInfo
%%Publisher=ARXIV
%%Project=AOP
%%Manuscript=AOP704
%EndFileInfo
%
% Institute of Mathematical Statistics (IMI)
% Journal "The Annals of Probabability"

%secthm,seceqn,secfloat,nameyear,noautosecdot
\documentclass[aop,MSNbibl,number,citesort,dvips]{arximspdf}
\usepackage{graphicx}
% settings
%

% article settings
\doi{10.1214/11-AOP704}
\volume{41}
\issue{2}
\pubyear{2013}
\firstpage{503}
\lastpage{526}

\makeatletter
\newcommand{\rr}{\mathbb{R}}
\newcommand{\abs}[1]{| #1 |}
\newcommand{\usimp}{\mathbb{S}}
\newcommand{\tsigma}{\tilde{\sigma}}
\newcommand{\vZ}{\mathbf{Z}}
\newcommand{\vk}{\mathbf{k}}
\newcommand{\vy}{\mathbf{y}}
\newcommand{\vtheta}{\boldsymbol{\Theta}}
\newcommand{\vone}{\mathbf{1}}
\newcommand{\diri}{\operatorname{Dir}}
\newcommand{\leb}{\mathbf{Leb}}

\newcommand{\tw}{\widetilde{W}}
\newcommand{\ent}{\mathrm{Ent}}
\newcommand{\treesp}{\mathbf{T}}
\newcommand{\tree}{\mathbf{t}}
\newcommand{\ctree}{\mathbb{T}}

\newcommand{\vtwo}{\mathbf{2}}

\newcommand{\ymini}{\vec{y}_i}
\newcommand{\zmini}{\vec{z}_i}
\newcommand{\tmini}{\vec{\theta}_i}

\newcommand{\eqref}[1]{(\ref{#1})}
\newtheorem{lemma}[theorem]{Lemma}
\newtheorem{theorem}{Theorem}
\newproclaim{rmk}{Remark}
\renewcommand{\epsilon}{\varepsilon}
\makeatother

\begin{document}
\begin{frontmatter}

\title{Wright--Fisher diffusion with negative mutation~rates}
\runtitle{Negative Wright--Fisher}

\begin{aug}
\author[A]{\fnms{Soumik} \snm{Pal}\corref{}\ead[label=e1]{soumik@u.washington.edu}}
\runauthor{S. Pal}
\affiliation{University of Washington}
\address[A]{Department of Mathematics\\
University of Washington\\
Seattle, Washington 98115\\
USA\\
\printead{e1}} %adresu isvedimo komanda gale!
\end{aug}

% HISTORY:
\received{\smonth{2} \syear{2010}}
\revised{\smonth{8} \syear{2011}}

% ABSTRACT
%
\begin{abstract}
We study a family of $n$-dimensional diffusions, taking values in the
unit simplex of vectors with nonnegative coordinates that add up to
one. These processes satisfy stochastic differential equations which
are similar to the ones for the classical Wright--Fisher diffusions,
except that the ``mutation rates'' are now nonpositive. This model,
suggested by Aldous, appears in the study of a conjectured diffusion
limit for a Markov chain on Cladograms. The striking feature of these
models is that the boundary is not reflecting, and we kill the process
once it hits the boundary. We derive the explicit exit distribution
from the simplex and probabilistic bounds on the exit time. We also
prove that these processes can be viewed as a ``stochastic
time-reversal'' of a Wright--Fisher process of increasing dimensions
and conditioned at a random time. A key idea in our proofs is a
skew-product construction using certain one-dimensional diffusions
called Bessel-square processes of negative dimensions, which have been
recently introduced by G\"oing-Jaeschke and Yor.
\end{abstract}

% KEYWORDS
%
\begin{keyword}[class=AMS]
\kwd{60G99}
\kwd{60J05}
\kwd{60J60}
\kwd{60J80}.
\end{keyword}
\begin{keyword}
\kwd{Wright--Fisher diffusion}
\kwd{Markov chain on cladograms}
\kwd{continuum random tree}
\kwd{Bessel processes of negative dimension}.
\end{keyword}

\end{frontmatter}

%s1 ###
%s1 #&#
\section{Introduction}

An $n$-leaf Cladogram is an unrooted tree with $n\ge4$ labeled leaves
(vertices with degree one) and $(n-2)$ other unlabeled vertices
(internal branchpoints) of degree three (see Figure~\ref{fig_tree}). The
number of edges in such a tree is exactly $2n-3$. Sometimes they are
also referred to as phylogenetic trees. Aldous, in~\cite{A00},
proposes the following model of a reversible Markov chain on the space
of all $n$-leaf Cladograms, which consists of removing a random leaf
(and its incident edge) and reattaching it to one of the remaining
random edges.

For a precise description we first define two operations on Cladograms.
More details, with figures, can be found in~\cite{A00}.
\begin{longlist}[(ii)]
\item[(i)] To \textit{remove a leaf} $i$. The leaf $i$ is attached by
an edge $e_1$ to a branchpoint $b$ where two other edges $e_2$ and
$e_3$ are incident. Delete edge $e_1$ and branchpoint $b$, and then
merge the two remaining edges $e_2$ and $e_3$ into a single edge $e$.
The resulting tree has $2n-5$ edges.
\item[(ii)] To \textit{add a leaf} to an edge $f$. Create a
branchpoint $b'$ which splits the edge $f$ into two edges, $f_2, f_3$,
and attach the leaf $i$ to branchpoint $b'$ via a new edge, $f_1$. This
restores the number of leaves and edges to the tree.
\end{longlist}

Let $\treesp_n$ denote the finite collection of all $n$-leaf
Cladograms. Write $\tree' \sim\tree$ if $\tree'\neq\tree$ and
$\tree'$ can be obtained from $\tree$ by following the two operations
above for some choice of $i$ and $f$. Thus a $\treesp_n$ valued chain
can be described by saying: remove leaf $i$ uniformly at random, and
then pick edge $f$ at random and reattach $i$ to $f$. If we assume
every edge to be of unit length, then it also involves resizing the
edge length after every operation. In particular the transition matrix
of this Markov chain is
\[
P(\tree, \tree')=
\cases{\displaystyle
\frac{1}{n(2n-5)},&\quad  if  $\tree'\sim\tree$,\vspace*{2pt}\cr\displaystyle
\frac{n}{n(2n-5)},&\quad  if  $\tree'=\tree$.
}
\]
This leads to a symmetric, aperiodic, and irreducible finite state
space Markov chain. Schweinsberg~\cite{S} proved that the relaxation
time for this chain is $O(n^2)$, improving a previous result in~\cite{A00}.

On his webpage~\cite{AOP} Aldous asks the following question: what is
an appropriate diffusion limit of this Markov chain? The invariant
distribution for the Markov chain on $n$-leaf Cladograms is clearly the
Uniform distribution. It is known (see Aldous~\cite{A93}) that the
sequence of Uniform distributions on $n$-leaf Cladograms converge
weakly to the law of the (Brownian) Continuum Random Tree (CRT). Hence,
it is natural to look for an appropriate Markov process on the support
of the CRT, which can be thought of as a limit of the sequence of
Markov chains described above. At this point it is important to
understand that the support of the CRT consists of compact real trees
with a measure describing the distribution of leaves. These trees are
called continuum trees. For a formal definition of these concepts, we
refer the reader to the seminal work by Aldous in~\cite{A93}. However,
for an intuitive visualization, one should think of a typical continuum
tree as a compact metric space on which branch points are dense, and
all edges are infinitesimally small. This implies that the Markov
process that mimics the operation of removing and inserting a new leaf
on a continuum tree should not jump; in other words, we can call it a diffusion.

A detailed description of this diffusion on continuum trees is
forthcoming in Pal~\cite{palCT}. In this article we consider several
important features of this limiting diffusion that are of interest by
themselves and provide bedrock for the followup construction.

%f1 ###
%
%f1 #&#
\begin{figure}

\includegraphics{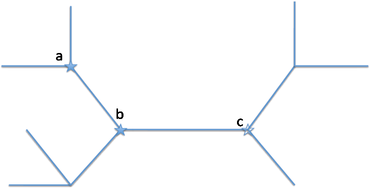}

\caption{A 7-leaf Cladogram.}\label{fig_tree}
\label{fig1}\end{figure}

Consider the branchpoint $b$ in the $7$-leaf Cladogram $\tree$ in
Figure~\ref{fig1}. It divides the collection of leaves naturally into three
sets. Let $X(\tree)=(X_1, X_2,\break X_3)(\tree)$ denote the vector of
proportion of leaves in each set. The corresponding number of edges in
these sets are $(2nX_1-1, 2nX_2-1, 2nX_3-1)$. For example, at time zero
in our given tree, going clockwise from the right we have $X(0)=(3/7,
2/7, 2/7)$.

Let $\usimp_n$ denote the unit simplex
%
%e1 #&#
\begin{equation}\label{whatisusimp}
\usimp_n = \Biggl\{ x\in\rr^n\dvtx  x_i \ge0 \mbox{ for all
$i$ and } \sum_{i=1}^n x_i=1 \Biggr\}.
\end{equation}
Some simple algebra will reveal that for any point $x=(x_1,x_2,x_3)$ in
$\usimp_3$, given $X(\tree)=x$, the difference $X_1(\tree')-
X_1(\tree)$ can only take values in $\{ -1/n,0,1/n\}$ with
corresponding probabilities
\[
q_{x_1}= x_1 \frac{2n(1-x_1)-2}{2n-5}, \qquad1-p_{x_1}-q_{x_1}, \qquad
p_{x_1}=(1-x_1)\frac{2n x_1-1}{2n-5}.
\]
Thus
%
%e2 #&#
\begin{eqnarray}\label{cladopara}
 \quad E \bigl( X_1(\tree') - X_1(\tree) \mid X(\tree)=x \bigr)&=& \frac
{1}{n}\frac{ 2x_1 - (1-x_1)}{2n-5}\approx-\frac{1}{n^2}\frac
{1}{2} ( 1-3x_1 ),\nonumber\\
 \quad E \bigl( \bigl( X_1(\tree') - X_1(\tree)\bigr)^2 \mid X(\tree)=x \bigr)&=&
\frac{1}{n^2}\frac{4nx_1(1-x_1)-x_1 - 1 }{2n-5}\\
 \quad &\approx&\frac{1}{n^2} 2x_1(1-x_1).\nonumber
\end{eqnarray}
If we take scaled limits, as $n$ goes to infinity, of the first two
conditional moments (the mixed moments can be similarly verified), it
is intuitive (and follows by standard tools) that as $n$ goes to
infinity, this Markov chain (run at $n^2/2$ speed) will converge to a
diffusion with a generator
%
%e3 #&#
\begin{equation}\label{cladogen}
\frac{1}{2}\sum_{i,j=1}^3 x_i ( 1\{i=j\} - x_j ) \frac
{\partial^2}{\partial x_i\, \partial x_j} - \frac{1}{2}\sum_{i=1}^n
\frac{1}{2} (1 - 3 x_i )\frac{\partial}{\partial x_i}.
\end{equation}

The generator written as above is similar to the generator for the
well-known diffusion limit of the Wright--Fisher (WF) Markov chain
models in population genetics. The WF model is one of the most popular
models in population genetics. This is a multidimensional Markov chain
which keeps track of the vector of proportions of certain genetic
traits in a population of nonoverlapping generations. A~good source
for an introduction to these models is Chapter 1 in the book by Durrett
\cite{durrettgenetics}. For computational purposes one often takes
recourse to a diffusion approximation, which, in its standard form,
leads to a family of diffusions parametrized by $n$ ``mutation rates.''
The state space of the diffusion is given by $\usimp_n$ and is
parametrized by a vector $(\delta_1, \ldots, \delta_n)$ of
nonnegative entries. A weak solution of the WF diffusion with
parameters $\delta=(\delta_1, \ldots, \delta_n)$ solves the
following stochastic differential equation for $i=1,2,\ldots,n$:
%
%e4 #&#
\begin{equation}\label{whatisjacobi}
dJ_i(t) = \frac{1}{2} \bigl(\delta_i - \delta_0 J_i(t) \bigr)\,dt +
\sum_{j=1}^n \tsigma_{i,j}(J)\,d\beta_j(t), \qquad\delta_0 = \sum
_{i=1}^n \delta_i.
\end{equation}
Here $\beta=(\beta_1, \ldots, \beta_n)$ is a standard
multidimensional Brownian motion, and the diffusion matrix $\tsigma$
is given by
%
%e5 #&#
\begin{equation}\label{whatistsigma}
\tsigma_{i,j}(x)= \sqrt{x_i} \bigl(1\{i=j\} - \sqrt{x_ix_j}
\bigr), \qquad1\le i,j\le n.
\end{equation}

We define the Wright--Fisher diffusion with \textit{negative mutation
rates} to be a family of $n$-dimensional diffusions, parametrized by
$n$ nonnegative parameters $\delta=(\delta_1, \ldots, \delta_n)$,
which is a weak solution of the following differential equation:
%
%e6 #&#
\begin{equation}\label{whatisnwf}
d\mu_i(t) = -\frac{1}{2} \bigl(\delta_i - \delta_0 \mu_i(t)
\bigr)\,dt + \sum_{j=1}^n \tsigma_{i,j}(\mu)\,d\beta_j(t), \qquad\delta
_0 = \sum_{i=1}^n \delta_i.
\end{equation}
%
%Here $\beta$ is a standard multidimensional Brownian motion and the
%diffusion matrix $\tsigma$ is given by \eqref{whatistsigma}.

The initial condition $\mu(0)$ is in the interior of $\usimp_n$ and
the process has a drift that pushes it outside the simplex. We will
show later that the process is sure to hit the boundary of the simplex
at which point we stop it. In the next section we will explicitly
construct a weak solution of \eqref{whatisnwf}. The uniqueness in law
of such a solution, until it hits the boundary, follows since the drift
and the diffusion coefficients are smooth (hence, Lipschitz) inside the
open unit simplex. The law of this process will then be denoted
uniquely by $\operatorname{NWF}(\delta_1, \ldots, \delta_n)$.

Equivalently this process can be identified by its Markov generator.
Expanding $\tsigma\tsigma'$ and using the fact that $\sum_{i=1}^n
x_i=1$, we get
%
%e7 #&#
\begin{equation}\label{genwf}
\mathcal{A}_n = \frac{1}{2}\sum_{i,j=1}^n x_i ( 1\{i=j\} -
x_j ) \frac{\partial^2}{\partial x_i\, \partial x_j} - \sum
_{i=1}^n \frac{1}{2} (\delta_i - \delta_0 x_i )\frac
{\partial}{\partial x_i},
\end{equation}
which identifies \eqref{cladogen} as the generator for $\operatorname{NWF}(1/2,1/2,1/2)$.

In this text we focus on properties of NWF models as a family of
diffusions on the unit simplex and explore some of their properties
that are important in the context of the Markov chain model on Cladograms.

\textit{Part} (1). We show that, just like Wright--Fisher
diffusions (see~\cite{vsm}), the NWF processes can be recovered from a
far simpler class of models, the Bessel-square (BESQ) processes with
negative dimensions. A comprehensive treatment of BESQ processes can be
found in the book by Revuz and Yor~\cite{RY}. This family of
one-dimensional diffusions is indexed by a single real parameter
$\theta$ (called the dimension) and are solutions of the stochastic
differential equations
%
%e8 #&#
\begin{equation}\label{besqintro}
Z(t)= x + 2 \int_0^t \sqrt{\abs{Z(s)}}\,d\beta(s) + \theta t, \qquad
x \ge0, t\ge0,
\end{equation}
where $\beta$ is a one-dimensional standard Brownian motion. We denote
the law of this process by $Q^\theta_x$. It can be shown that the
above SDE admits a unique strong solution until it hits the origin. The
classical model only admits paramater $\theta$ to be nonnegative.
However, an extension, introduced by G\"oing-Jaeschke and Yor \cite
{yornbesq}, allows the parameter $\theta$ to be negative. It is
important to note that $Q_x^\theta$ is the diffusion limit of a
Galton--Watson branching process with a $\abs{\theta}$ rate of
immigration (for $\theta\ge0$) or emigration (for $\theta<0$).

In Section~\ref{sec:timechange} we show that the $\operatorname{NWF}(\delta_1,\ldots
, \delta_n)$ law, starting at $(x_1,\ldots,x_n)$, can be recovered
via a stochastic time-change from a collection of $n$ independent
processes with laws $Q_{x_i}^{-2\delta_i}$, $i=1,\ldots,n$, and
dividing each coordinate by the total sum. For the corresponding
discrete models this is usually referred to as Poissonization.

In this article we utilize this relationship to infer several
properties about the NWF processes. For example, we prove that these
diffusions, almost surely, hit the boundary of the simplex. We derive
the explicit exit density supported on the union of the boundary walls
in Theorem~\ref{thm:exitnwf}.

\textit{Part} (2). We also prove an interesting duality
relationship between WF and NWF models. To describe the duality
relationship we let the NWF continue in the lower dimensional simplex
when any of the coordinates hit zero. Thus, every time a coordinate
hits zero, the dimension of the process gets reduced by one, and
ultimately the process is absorbed at the scalar one. Such a process
can be obtained by running a WF model with appropriate parameters that
initially starts with dimension one and value $1$. At independent
random times, the dimension of the process increases by one, and the
newly added coordinate is initialized at zero. Finally we condition on
the values of the process at a chosen random time. The resulting
process, backwards in time and suitably time-changed, is the original
NWF model.

\textit{Part} (3). The time that the NWF process takes to
exit the simplex is a crucial quantity due to a reason which we
describe below. We keep our exposition mostly verbal without going into
too much detail since the details require considerable formalism from
the theory of continuum trees and will be discussed elsewhere. In \cite
{palCT} we show how {Part (1)} points toward a Poissonization
of the entire Aldous Markov chain, which is simpler for considering
scaled limits. The Poissonized version of the Markov chain on $n$-leaf
Cladograms stipulates: every existing leaf has an exponential clock of
rate $2$ attached to it which determines the instances of their deaths,
and every existing edge has an independent exponential clock of rate
$1$ attached to it, at which point the edge is split, and a new pair of
vertices (one of which is a leaf) is introduced. It is an easy
verification that the rates are consistent with the BESQ limit that we
claimed in {Part (1)} above. Hence, one would expect that the
limit of the Poissonized chains on continuum trees, normalized to give
a leaf-mass measure one, and suitably time-changed would give the
conjectured Aldous diffusion. This is the strategy followed in~\cite{palCT}.

Now, the Poissonized chain has some beautiful and interesting
structures. Please see~\cite{A93} for the details about continuum
trees that we use below. A~continuum tree $\ctree$ comes with its
associated (infinite) length measure (analogous to the Lebesgue
measure) and a leaf-mass probability measure, which describes how the
leaves are distributed on it. We will denote the length measure by
$\leb(\ctree)$ and the leaf-mass probability measure by $\mu(\ctree
)$. Suppose we sample $n$ i.i.d. elements from $\mu(\ctree)$ and draw
the tree generated by them, which produces an $n$-leaf Cladogram with
edge-lengths (or, a proper $n$-tree, according to~\cite{A93}). Thus,
by using the fact that the continuum tree is compact, one can
approximate a continuum tree by a sequence of $n$-leaf Cladograms.

Now consider an $n$-leaf Cladogram for a very large $n$, and further
consider $m$ internal branchpoints. For example, in Figure~\ref{fig1}, we have
three branchpoints $\{a,b,c\}$ in a $7$-leaf Cladogram. These
branchpoints generate a \textit{skeleton} subtree of the original tree
and partition the leaves as \textit{internal} or \textit{external} to
the skeleton. The components of the vector of external leaf masses grow
as independent continuous time, binary branching, Galton--Watson
branching processes with a rate of branching/dying $2$ and a rate of
emigration $1$. Note that this is consistent with the diffusion limit
as BESQ with $\theta=-1$. As the Markov chain (Poissonized or not)
proceeds, there comes a time when one of these external leaf masses
gets exhausted. When this happens, one of the internal branch points
becomes a leaf. The distribution of every coordinate of external
leaf-masses at this exit time is derived in Part (2). Until
this time, supported on the skeleton, new subtrees can grow and decay.
We show, in~\cite{palCT}, that the dynamics of the sizes of these
subtrees on the internal part can be modeled as the age process of a
chronological splitting tree. Chronological splitting trees are a
special kind of biological tree, where an individual lives up to a
certain (possibly nonexponential) lifetime and produces children at
rate one during that lifetime. Her children behave in an identical
manner with an independent and identically distributed lifetime of
their own. The age process refers to the point process of current ages
of the existing members in the family. More details about splitting
trees can be found in the article by Lambert~\cite{lambertAOP}.

When one of the internal vertices gets \textit{exposed}, the above
dynamics breaks down, and we need to find a slightly different set of
internal vertices to proceed. Hence, it is important to derive
estimates of the times at which this change happens.
We provide quantitative bounds on the value of this stopping time under
the special situation of symmetric choice of parameters, which is the
case at hand.

The article is divided as follows. Our main tool in this analysis is to
establish a relationship between NWF processes and Bessel-square
processes of negative dimensions, much in the spirit of Pal \cite
{vsm}. This has been done in Section~\ref{sec:timechange} where we
also establish Theorem~\ref{mainthm}. The relevant results about BESQ
processes have been listed in Section~\ref{sec:besq}. Most of these
results are known, and appropriate citations have been provided. Proofs
of the rest can be found in the \hyperref[appm]{Appendix}. Exact computations of exit
density from the simplex have been done in Section \ref
{sec:exitdensity}. Estimates of the exit time have been established in
Section~\ref{sec:exittime}.

%s2 ###
%s2 #&#
\section{Some results about BESQ processes}\label{sec:besq}
The Bessel-square processes of negative dimensions $-\theta$, where
$\theta\ge0$, are one-dimensional diffusions which are the unique
strong solution of the SDE
%
%e9 #&#
\begin{equation}\label{sdenbesq}
X(t) = x - \theta t + 2\int_0^t \sqrt{X(s)}\,d\beta(s),  \qquad  t \le T_0,
\end{equation}
where $T_0$ is the first hitting time of zero for the process $X$, and
$x$ is a positive constant. The process is absorbed at zero.
We will denote the law of this process $Q_x^{-\theta}$ just as BESQ of
a positive dimension $\theta$ will be denoted by $Q^{\theta}_x$.

The following collection of results is important for us. All the proofs
can be found in the article by G\"oing-Jaeschke and Yor~\cite{yornbesq}.

%le1 #&#
\begin{lemma}[(Time-reversal)]\label{lem:trev} For any $\theta> -2$ and
any $x >0$, $Q_x^{-\theta}(T_0 < \infty)=1$, while for $\theta\ge
2$, one has $Q^\theta_x(T_0 < \infty)=0$.

Moreover the following equality holds in distribution:
%
%e10 #&#
\begin{equation}\label{Qtreversal}
\bigl( X(T_0 - u), u \le T_0 \bigr) = \bigl( Y(u), u \le L_x
\bigr),
\end{equation}
where $Y$ has law $Q^{4+\theta}_0$, and $L_x$ is the last hitting time
of $x$ for the process~$Y$.

In particular:
\begin{longlist}[(ii)]
\item[(i)] Both $L_x$ and $T_0$ are distributed as $x/2G$, where $G$
is a Gamma random variable with parameter $(\theta/2 +1)$.
\item[(ii)] The transition probabilities $p_t^\theta(x,y)$ for $x,y
>0$ satisfy the identity
\[
p_t^{-\theta}(x,y)=p_t^{4+\theta}(y,x).\vadjust{\goodbreak}
\]
\end{longlist}
\end{lemma}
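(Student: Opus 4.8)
The statement splits into three pieces: (a) the dichotomy for reaching the origin; (b) the pathwise time-reversal \eqref{Qtreversal}; and (c) the consequences (i) and (ii). For (a) I would appeal to one-dimensional diffusion theory. The generator of $Q^{-\theta}_x$ is $\mcal{L} = 2x\,\frac{d^2}{dx^2} - \theta\,\frac{d}{dx}$, whose scale function is $s(x) = x^{1+\theta/2}$ up to an affine change (with $s(x) = \log x$ in the degenerate case $\theta = -2$). For $\theta > -2$ one has $s(0+) = 0 > -\infty$ while $s(+\infty) = +\infty$, so in natural scale the process is a time-changed Brownian motion on $(0,\infty)$, which hits $0$ almost surely; since BESQ processes do not explode in finite time, this gives $Q^{-\theta}_x(T_0 < \infty) = 1$. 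For the companion claim, the scale function of $Q^\theta_x$ is $x^{1-\theta/2}$ (resp.\ $\log x$ for $\theta = 2$), which tends to $-\infty$ as $x \downarrow 0$ precisely when $\theta \ge 2$; then $0$ is a natural, inaccessible boundary and $Q^\theta_x(T_0 < \infty) = 0$.

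The core of the lemma is (b), and my plan is to route it through the explicit transition kernels. Put $\nu = 1 + \theta/2$. Since $(4+\theta)/2 - 1 = \nu$, the honest BESQ$(4+\theta)$ semigroup has the modified-Bessel density
\[
p^{4+\theta}_t(y,x) \;=\; \frac{1}{2t}\Bigl(\tfrac{x}{y}\Bigr)^{\nu/2}\,\exp\!\Bigl(-\tfrac{x+y}{2t}\Bigr)\,I_\nu\!\Bigl(\tfrac{\sqrt{xy}}{t}\Bigr), \qquad x,y>0 .
\]
On the other hand, the process $Q^{-\theta}_x$ absorbed at $0$ carries a \emph{sub}-probability density $q^{-\theta}_t(x,y)$ on $(0,\infty)$: its natural index is $(-\theta)/2 - 1 = -\nu$, and since for negative dimensions the Lebesgue density built from $I_{-\nu}$ is non-integrable at the origin, the absorbed semigroup is the minimal one, built from $I_{|-\nu|} = I_\nu$; a direct comparison of the two closed forms then yields $q^{-\theta}_t(x,y) = p^{4+\theta}_t(y,x)$, which is exactly part (ii). To upgrade this kernel duality to the path statement \eqref{Qtreversal}, I would invoke the general time-reversal theorem for Markov processes (Nagasawa's theorem, or the classical reversal of a transient diffusion from a hitting time): reversing $X \sim Q^{-\theta}_x$ from its absorption time $T_0$ produces a Markov process whose semigroup is the dual of $(q^{-\theta}_t)$ with respect to the speed measure, which by the identity just established is the BESQ$(4+\theta)$ semigroup; one then identifies the starting law of the reversed process with the entrance law of $Q^{4+\theta}_0$ at $0$, and observes that the reversal of the absorption event for $X$ corresponds to the last-passage time $L_x$ of level $x$ for $Y \sim Q^{4+\theta}_0$. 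This gives \eqref{Qtreversal}.

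Consequence (i) is then short. The equality in law of $L_x$ and $T_0$ is immediate from \eqref{Qtreversal}; to identify the common law I would integrate out the endpoint,
\[
Q^{-\theta}_x(T_0 > t) \;=\; \int_0^\infty q^{-\theta}_t(x,y)\,dy \;=\; \frac{1}{\Gamma(\nu)}\int_0^{x/2t} u^{\nu-1}e^{-u}\,du \;=\; \mathbb{P}\bigl(\mathrm{Gamma}(\nu) < \tfrac{x}{2t}\bigr),
\]
which is exactly $\mathbb{P}(x/2G > t)$ with $G \sim \mathrm{Gamma}(\nu) = \mathrm{Gamma}(\theta/2 + 1)$; equivalently one can solve $\mcal{L}u = \lambda u$ and read off the Laplace transform of $T_0$. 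Part (ii) has already been obtained in the course of proving (b).

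\noindent\textbf{Main obstacle.} The one genuinely delicate point is the passage from the kernel identity $q^{-\theta}_t(x,y) = p^{4+\theta}_t(y,x)$ to the pathwise reversal \eqref{Qtreversal}: one must pick the correct form of the time-reversal theorem near the two singular endpoints — the entrance behaviour of the transient diffusion $Q^{4+\theta}_0$ at $0$, and the co-optional (last-exit) structure of $L_x$ — and verify that reversing the killing of $X$ at the origin yields precisely a path run up to time $L_x$. The remaining ingredients — boundary classification via scale functions and the Bessel-function manipulations — are routine.
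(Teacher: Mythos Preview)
The paper does not actually prove this lemma: it is stated as a ``collection of results'' with the sentence ``All the proofs can be found in the article by G\"oing-Jaeschke and Yor \cite{yornbesq}'' immediately preceding it, and no argument is given in the text or the appendix. So there is no in-paper proof to compare against.

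Your strategy is correct and is essentially the classical route taken in the cited reference (and earlier in Pitman--Yor for positive-dimensional Bessel processes): boundary classification via scale, the explicit modified-Bessel transition kernels to get the duality $q^{-\theta}_t(x,y)=p^{4+\theta}_t(y,x)$, and then a general time-reversal theorem to lift this to paths. Your identification of the delicate step --- matching the entrance law of $Q^{4+\theta}_0$ at the origin with the reversal of absorption, and recognising the co-terminal time as $L_x$ --- is exactly where the work lies, and G\"oing-Jaeschke and Yor handle it in precisely this way (invoking the Sharpe/Nagasawa-type reversal used in \cite{besselbridge}). The only remark worth adding is that your assertion that the absorbed semigroup for negative dimension is the one built from $I_{|\nu|}$ rather than $I_{-\nu}$ is itself a substantive fact (it amounts to saying the killed process is the minimal diffusion, i.e., that $0$ is exit-not-entrance, which you do establish in part (a) via scale/speed); once that is granted, the kernel identity and hence (ii) are immediate, and your integration for (i) is straightforward.
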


The following results have been proved in the \hyperref[appm]{Appendix}.

%le2 #&#
\begin{lemma}\label{scale}
The scale function for $Q^{-\theta}$, $\theta\ge0$, is given by the function
\[
s(x)= x^{\theta/2+1},  \qquad  x\ge0.
\]
Moreover:
\begin{longlist}[(ii)]
\item[(i)] The origin is an exit boundary for the diffusion and not an entry.
\item[(ii)] The change of measure
\[
x^{-\theta/2-1}Q_x^{-\theta} ( X(t)^{\theta/2+1}1 ( \cdot
) )
\]
on the $\sigma$-algebra generated by the process up to time $t$ is
consistent for various $t$ and is the law of $Q_x^{4+\theta}$. Thus,
we say $Q_x^{4+\theta}$ is $Q_x^{-\theta}$ conditioned never to hit zero.
\end{longlist}
\end{lemma}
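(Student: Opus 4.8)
The plan is to compute the scale function directly from the generator of $Q^{-\theta}_x$ and then read off the boundary behaviour at the origin via Feller's classification. The generator of \eqref{sdenbesq} is $\mathcal{L}f(x) = 2x f''(x) - \theta f'(x)$ on $(0,\infty)$. The scale function $s$ is (up to affine transformation) any nonconstant solution of $\mathcal{L}s = 0$, i.e.\ $2x s'' = \theta s'$, which gives $s'(x) = c\, x^{\theta/2}$ and hence $s(x) = x^{\theta/2+1}$ after fixing the constants so that $s(0)=0$. This is a one-line verification: with $s(x)=x^{\theta/2+1}$ we have $s'(x) = (\theta/2+1)x^{\theta/2}$ and $s''(x) = (\theta/2+1)(\theta/2)x^{\theta/2-1}$, so $2x s''(x) - \theta s'(x) = (\theta/2+1)x^{\theta/2}\bigl(\theta - \theta\bigr) = 0$. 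Note $s$ is strictly increasing on $[0,\infty)$ for every $\theta \ge 0$, and $s(0)=0$, $s(\infty)=\infty$.

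For part (i), I would use the speed measure $m(dx)$ associated with $(s,\mathcal{L})$; from the standard formulas, $m'(x)$ is proportional to $1/\bigl(a(x)s'(x)\bigr)$ where $a(x)=2x$ is the diffusion coefficient, so $m'(x) \propto x^{-\theta/2-1}$. Feller's test then requires examining, near $0$, the two integrals $\int_0^{\cdot} s'(x)\, m((x,\cdot])\,dx$ and $\int_0^{\cdot} m'(x)\bigl(s(\cdot)-s(x)\bigr)\,dx$. Since $s(0)=0$ is finite, the origin is accessible; to show it is an exit (absorbing, reachable) boundary but not an entrance, I would check that the ``can leave $0$'' integral $\int_0^{\epsilon} s(x)\, m'(x)\,dx \propto \int_0^{\epsilon} x^{\theta/2+1}\cdot x^{-\theta/2-1}\,dx = \int_0^\epsilon dx < \infty$ is finite — wait, that finiteness actually needs care, so the precise bookkeeping here (which of the two Feller integrals diverges) is the point that must be done carefully; the upshot, consistent with the branching-process interpretation and with Lemma~\ref{lem:trev} ($T_0<\infty$ a.s.\ for $\theta>-2$), is that $0$ is an exit-not-entrance boundary, matching the prescription in \eqref{sdenbesq} that the process is absorbed there.

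For part (ii), the displayed change of measure is a Doob $h$-transform with $h(x)=s(x)=x^{\theta/2+1}$, which is $\mathcal{L}$-harmonic by the computation above; that $h$ is harmonic is exactly what makes $x^{-\theta/2-1}s(X(t))\mathbf{1}(\cdot)$ a consistent family of measures (a martingale property of $s(X(t))$ up to $T_0$, using that $0$ is absorbing so $s(X(t))$ stays a genuine martingale). It remains to identify the $h$-transformed process: its generator is $\widetilde{\mathcal{L}}f = \frac{1}{h}\mathcal{L}(hf) = 2x f'' + \bigl(2x \cdot 2 s'/s - \theta\bigr) f' = 2x f'' + \bigl(4(\theta/2+1) - \theta\bigr) f' = 2x f'' + (\theta+4) f'$, which is precisely the generator of $Q^{\theta+4}_x$ from \eqref{besqintro}. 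Since $\theta + 4 \ge 2$, Lemma~\ref{lem:trev} gives that this process never hits zero, justifying the phrase ``$Q_x^{-\theta}$ conditioned never to hit zero.'' The main obstacle is genuinely just the Feller boundary bookkeeping in part (i); the scale-function computation and the $h$-transform identification in (ii) are routine once $s$ is in hand.
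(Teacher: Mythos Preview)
Your scale-function computation and part~(ii) are correct.  For~(ii) you identify the $h$-transform at the generator level, which is clean; the paper instead runs Girsanov on the SDE, obtaining $d\beta = d\beta^* + (\theta+2)X^{-1/2}\,dt$ under the tilted measure and hence $dX = (\theta+4)\,dt + 2\sqrt{X}\,d\beta^*$.  The two arguments are equivalent---yours is the infinitesimal version of theirs---and both rely on the same fact $\mathcal{L}s=0$.

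The genuine gap is in part~(i), and you flag it yourself.  The integral you compute,
\[
\int_0^\epsilon s(x)\,m'(x)\,dx \;\propto\; \int_0^\epsilon x^{\theta/2+1}\cdot x^{-\theta/2-1}\,dx \;<\; \infty,
\]
is not the ``can leave $0$'' integral; in the It\^o--McKean notation the paper uses it is $\Sigma(0)=\int_0 (s(\xi)-s(0))\,m(d\xi)$, whose finiteness says the process \emph{reaches} $0$ (this is the exit half of the claim).  The entrance criterion is the other Feller integral, $N(0)=\int_0 m(0,\xi]\,s(d\xi)$, and the decisive observation you are missing is simply that the speed measure has infinite mass near the origin:
\[
m(0,\xi]\;\propto\;\int_0^\xi u^{-\theta/2-1}\,du \;=\; \infty \quad\text{for every }\theta\ge 0.
\]
Hence $N(0)=\infty$ trivially, and $0$ is exit-not-entrance.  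So the ``precise bookkeeping'' you defer is in fact a one-line computation of $m(0,\xi]$; once you write it down there is nothing delicate left.  Note also that your labelling of the integral you did compute is backwards, which is likely the source of your hesitation.
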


The previous fact is the generalization of the well-known observation
that Brownian motion, conditioned never to hit the origin, has the law
of the three-dimensional Bessel process.

%le3 #&#
\begin{lemma}\label{logct}
Let $\{Z(t), t\ge0\}$ denote a BESQ process of dimension $\theta$
for some $\theta> 2$. Then
\[
\lim_{\epsilon\rightarrow0}\frac{1}{\log(1/\epsilon)} \int
_{\epsilon}^t \frac{du}{Z(u)}=\frac{1}{\theta-2}   \qquad \mbox{for
all } t > 0.
\]
\end{lemma}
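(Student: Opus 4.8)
The statement concerns the almost-sure rate at which the additive functional $A_\epsilon:=\int_\epsilon^t \frac{du}{Z(u)}$ blows up as $\epsilon\downarrow 0$, and I would read $Z$ as the BESQ$(\theta)$ diffusion \emph{started from $0$} — only then does $Z$ spend enough time near the origin for $A_\epsilon$ to diverge (if $Z(0)>0$ the integral converges and the left side tends to $0$), and this is the case relevant to the applications. Since $\theta>2$, Lemma~\ref{lem:trev} says $Z$ never returns to $0$, so $Z(u)>0$ for all $u>0$ and each $A_\epsilon$, $0<\epsilon<t$, is finite a.s. The plan is to rewrite $A_\epsilon$ as a quantity whose first two moments are transparent, and then combine a Chebyshev bound along a subsequence with the monotonicity of $\epsilon\mapsto A_\epsilon$. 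First I would apply It\^o's formula to $\log Z$ on $[\epsilon,t]$ (legitimate because there $Z$ is continuous and bounded away from $0$); using $dZ(u)=\theta\,du+2\sqrt{Z(u)}\,d\beta(u)$ this gives
\[
\log Z(t)-\log Z(\epsilon)=(\theta-2)\,A_\epsilon+2M_\epsilon,\qquad M_\epsilon:=\int_\epsilon^t\frac{d\beta(u)}{\sqrt{Z(u)}},
\]
equivalently $A_\epsilon=\tfrac{1}{\theta-2}\bigl(\log Z(t)-\log Z(\epsilon)-2M_\epsilon\bigr)$.

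Next I would compute moments. By Brownian scaling of BESQ, the one-dimensional marginal $Z(u)$ (started from $0$) has the law of $2u\,G$ with $G$ a standard Gamma variable of shape $\theta/2$; hence $E[1/Z(u)]=\tfrac{1}{(\theta-2)u}$, so $E[A_\epsilon]=\tfrac{1}{\theta-2}\log(t/\epsilon)$, and $M_\epsilon$ (a continuous local martingale in the forward variable, $\epsilon$ fixed, with integrable bracket $A_\epsilon$) is a genuine $L^2$-martingale with $E[M_\epsilon]=0$ and $E[M_\epsilon^2]=E[A_\epsilon]=\tfrac{1}{\theta-2}\log(t/\epsilon)$. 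Crucially, $\log Z(u)=\log(2u)+\log G$, so $\mathrm{Var}(\log Z(u))=\mathrm{Var}(\log G)=\psi'(\theta/2)$ is a finite constant \emph{independent of $u$}. Feeding these into the displayed identity and using $\mathrm{Var}(X_1+X_2+X_3)\le 3\sum\mathrm{Var}(X_i)$ yields
\[
\mathrm{Var}(A_\epsilon)\le\frac{3}{(\theta-2)^2}\Bigl(2\psi'(\theta/2)+\tfrac{4}{\theta-2}\log(t/\epsilon)\Bigr)=O\bigl(\log(1/\epsilon)\bigr),
\]
so $A_\epsilon$ has mean of order $\log(1/\epsilon)$ but variance of order only $\log(1/\epsilon)$.

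To conclude I would use Borel--Cantelli and then interpolate. Along the sparse subsequence $\epsilon_k:=e^{-k^2}$ one has $E[A_{\epsilon_k}]\sim k^2/(\theta-2)$ and $\mathrm{Var}(A_{\epsilon_k})=O(k^2)$, so Chebyshev gives $P\bigl(|A_{\epsilon_k}/E[A_{\epsilon_k}]-1|>\delta\bigr)=O(k^{-2})$, which is summable; hence $A_{\epsilon_k}/\log(1/\epsilon_k)\to\tfrac{1}{\theta-2}$ a.s. Since $\epsilon\mapsto A_\epsilon$ is nonincreasing while $\log(1/\epsilon_{k+1})/\log(1/\epsilon_k)=(k+1)^2/k^2\to1$, for $\epsilon\in(\epsilon_{k+1},\epsilon_k)$ one can squeeze
\[
\frac{A_{\epsilon_k}}{\log(1/\epsilon_k)}\cdot\frac{k^2}{(k+1)^2}\ \le\ \frac{A_\epsilon}{\log(1/\epsilon)}\ \le\ \frac{A_{\epsilon_{k+1}}}{\log(1/\epsilon_{k+1})}\cdot\frac{(k+1)^2}{k^2},
\]
and both ends tend to $\tfrac{1}{\theta-2}$, which proves the limit for this $t$. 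Finally, for any other $t'$ the difference $A^{(t)}_\epsilon-A^{(t')}_\epsilon=\int_{t'}^t du/Z(u)$ is a fixed finite random variable, so dividing by $\log(1/\epsilon)\to\infty$ the limit $\tfrac{1}{\theta-2}$ holds simultaneously for all $t>0$ on a single full-probability event.

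The step I expect to be the main obstacle is the variance control in the second paragraph: one must see that $\log Z(\epsilon)$ deviates from $\log\epsilon$ only by an $O(1)$ amount whose variance does not depend on $\epsilon$, and that the martingale $M_\epsilon$ accumulates only $O(\log(1/\epsilon))$ of quadratic variation, so that $\mathrm{Var}(A_\epsilon)=O(\log(1/\epsilon))$ is of strictly smaller order than $(E[A_\epsilon])^2$, which is of order $(\log(1/\epsilon))^2$ — this is exactly what makes the concentration run. A minor secondary point is the choice of subsequence: $\epsilon_k=e^{-k^2}$ must be sparse enough for summability in Borel--Cantelli yet have $\log(1/\epsilon_k)$ slowly varying, so that the monotone interpolation to general $\epsilon$ does not lose the constant. (A heavier alternative would be to prove $\log Z(\epsilon)/\log\epsilon\to1$ a.s.\ directly — from BESQ scaling together with tail bounds on the supremum and infimum of $Z$ over dyadic time-blocks, the infimum bound using the scale function $x^{1-\theta/2}$ of positive-dimensional BESQ and $\theta>2$ — and then combine it with the Dambis--Dubins--Schwarz time-change $M_\epsilon=\gamma(A_\epsilon)$ and the strong law $\gamma(v)/v\to0$ for Brownian motion; the second-moment argument above avoids both the fine path estimates and the time-reversed filtration.)
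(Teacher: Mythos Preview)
Your argument is correct and self-contained, but it is genuinely different from the paper's proof. The paper exploits the time-inversion property of $Q_0^\theta$: the process $\tilde Z(s)=s^2 Z(1/s)$ again has law $Q_0^\theta$, and the substitution $u=1/s$ turns $\int_\epsilon^1 du/Z(u)$ into $\int_1^{1/\epsilon} ds/\tilde Z(s)$. Hence the small-time blow-up of the clock is identified in law, as a family in $\epsilon$, with the large-time growth of the Bessel clock $\int_1^{T}ds/Z(s)$, for which the paper simply invokes the almost-sure law of large numbers of Yor and Zani (Theorem~1.1 in \cite{YZ}) to obtain the limit $1/(\theta-2)$. So the paper's route is a two-line reduction to an external result. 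Your route, by contrast, is a direct second-moment argument: the It\^o decomposition $\log Z(t)-\log Z(\epsilon)=(\theta-2)A_\epsilon+2M_\epsilon$, the scaling identity $Z(u)\stackrel{d}{=}2uG$ which pins down both $E[A_\epsilon]$ and the constancy of $\mathrm{Var}(\log Z(\epsilon))$, and then a Borel--Cantelli along $\epsilon_k=e^{-k^2}$ with monotone interpolation. The paper's approach is considerably shorter but leans on time-inversion plus the Yor--Zani theorem; yours is longer but entirely elementary (nothing beyond It\^o, Gamma moments, and Chebyshev), and in fact yields the quantitative estimate $\mathrm{Var}(A_\epsilon)=O(\log(1/\epsilon))$, which the citation-based proof does not give directly.
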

%
%We also prove a curious Siegmund duality relation between $Q^{-\theta
%}
%$and $Q^{2+\theta}$.
%
%Let $\{X(t), t\ge0\}$ and $\{Y(t), t\ge0\}$ denote a process
%with law $Q^{-\theta}$ and $Q^{2 + \theta}$ respectively, for some
%$\theta\ge0$. Then, for any $x \ge y \ge0$ we have
%Q^{-\theta}_x ( X(t) \le y )= Q^{2+\theta}_y ( Y(t)
%%
%}
%
%%

%s3 ###
%s3 #&#
\section{Changing and reversing time}\label{sec:timechange}

Our objective in this section is to establish a time-reversal
relationship between NWF and WF models.

%th4 #&#
\begin{theorem}\label{bestimechange}
Let $z_1, \ldots, z_n$ and $\theta_1,\ldots, \theta_n$ be
nonnegative constants. Let $Z=(Z_1, \ldots,Z_n)$ be a vector of $n$
independent BESQ processes of dimensions $-\theta_1, \ldots, -\theta
_n$, respectively, starting from $(z_1, \ldots, z_n)$. Let $\zeta$ be
the sum $\sum_{i=1}^n Z_i$.

Define
\[
T_i= \inf\{ t\ge0\dvtx  Z_i(t)=0 \},  \qquad \tau=\bigwedge
_{i=1}^n T_i.
\]

Then, there is an $n$-dimensional diffusion $\mu$, satisfying the SDE
in \eqref{whatisnwf} for $\operatorname{NWF}(\theta_1/2, \ldots, \theta_n/2)$, for
which the following equality holds:
%
%e11 #&#
\begin{equation}\label{bestime}
Z_i(t\wedge\tau) = \zeta(t\wedge\tau) \mu_i ( 4C_t ),
 \qquad 1\le i \le n, \qquad  C_t=\int_0^{t\wedge\tau} \frac{ds}{\zeta(s)}.
\end{equation}
Thus, in particular, equation \eqref{whatisnwf} admits a weak solution
for all nonnegative parameters $(\delta_1, \ldots, \delta_n)$.
%
%Moreover, consider the martingale $\beta^*$ defined by
%%
%d \zeta(t\wedge\tau) = - ( \sum_{i=1}^n \theta_i )
%d (t \wedge\tau) + 2 \sqrt{\zeta(t)}\,d\beta^*(t).
%%
%Then the process $\nu$ is independent of the Dambins--Dubins--Schwarz
%(DDS) Brownian motion of $\beta^*$ defined over a possibly enlarged
%space.
%}
\end{theorem}

\begin{pf} The proof is almost identical to the case of WF model as
shown in~\cite{vsm}, Proposition 11, with obvious modifications.\vadjust{\goodbreak} For
example, unlike the WF case, the time-change clock is no longer
independent of the NWF process. We outline the basic steps below.

We know from \eqref{sdenbesq} that
\[
d Z_i(t \wedge\tau) = - \theta_i d(t\wedge\tau) + 2 \sqrt{Z_i}\,d\beta_i( t \wedge\tau),  \qquad  i=1,2,\ldots,n.
\]
Define $\theta_0=\sum_{i=1}^n \theta_i$. Let $V_i(t)=Z_i/\zeta(t)$
for $t \le\tau$. Then by It\^o's rule, we get
%
%e12 #&#
\begin{equation}\label{msde}
dV_i(t\wedge\tau)=-\zeta^{-1} [\theta_i - \theta_0 V_i
]\,d(t\wedge\tau) + \sqrt{{V_i}(1-V_i)}\,dM_i(t),
\end{equation}
where
%
%e13 #&#
\begin{equation}\label{howtodefmi}
dM_i(t)= \frac{2\zeta^{-1/2}}{\sqrt{1- V_i}}\sum_{j=1}^n \bigl(1\{
i=j\} - \sqrt{V_iV_j} \bigr)\,d\beta_j(t\wedge\tau),
\end{equation}
and $\langle M_i \rangle(t)=4C_t$.

Let $\{\rho_u, u\ge0 \}$ be the inverse of the increasing function
$4C_t$. Applying this time-change to the SDE for $V_i$ in \eqref
{msde}, we get
%
%e14 #&#
\begin{equation}\label{nuwt}
d\mu_i(t) =- \tfrac{1}{4} [\theta_i - \theta_0 \mu_i ]\,dt
+ \sqrt{\mu_i(1-\mu_i)}\tw_i(t),
\end{equation}
where $\tw_i$ is the Dambis--Dubins--Schwarz (DDS; see~\cite{RY}, page~181) Brownian motion associated with $M_i$. This turns out
to be the SDE for $\operatorname{NWF}(\theta_1/2,\break \ldots,  \theta_n/2)$.
\end{pf}

%once we prove that the diffusion matrix is given by $\tsigma$. To
%compute it, note that
%%
%%
%%
%%
%Now changing time by $\rho$, we immediately get $\iprod{\mu_i, \mu
%_j}=\tsigma_{i,j}(\mu)$ as desired.}
%

Let $\theta_1, \theta_2, \ldots, \theta_n$ be nonnegative and
$z_1, z_2, \ldots, z_n$ be positive constants. For $i=1,2,\ldots,n$
define independent random variables $(G_1, \ldots, G_n)$ where $G_i$
is distributed as $\operatorname{Gamma}(\theta_i/2 + 1)$. Let
%
%e15 #&#
\begin{equation}\label{whatisri}
R_i=\frac{z_i}{2 G_i}, \qquad i=1,2,\ldots,n.
\end{equation}
Also, independent of $(G_1, \ldots, G_n)$, let $Y_1, Y_2, \ldots,
Y_n$ be $n$ independent BESQ processes of positive dimensions
$(4+\theta_1),(4+\theta_2), \ldots, (4 + \theta_n)$, respectively,
all of which are starting from zero.

For any permutation $\pi$ of $n$ labels, condition on the event
%
%e16 #&#
\begin{equation}\label{condperm}
R_{\pi_1} > R_{\pi_2} >\cdots> R_{\pi_n}     \quad  \mbox{and
let} \quad  R^*=R_{\pi_2}.
\end{equation}

We now construct the following $n$ dimensional process $(X_1, \ldots, X_n)$:
%
%e17 #&#
\begin{equation}\label{whatisxi}
X_i(t)=Y_i \bigl( (t- R^* + R_i)^+ \bigr), \qquad t\ge0.
\end{equation}
Notice that at time $t=0$, every $X_i$ is at zero except the $\pi_1$th.

Let $S(t)$ denote the total sum process $\sum_{i=1}^n X_i(t)$. Note
that $S(t) >0$ for all $t \ge0$ with probability one. Define the process
%
%e18 #&#
\begin{equation}\label{whatisct}
C_t:=\int_{0}^{t}\frac{du}{S(u)}, \qquad  t > 0.
\end{equation}
The process $C_t$ is finite almost surely for every $t$ (unfortunately,
we cannot define $R^*=R_{\pi_1}$ precisely because $C_t$ will be
infinity; see Lemma~\ref{logct}). Let $A$ denote the inverse function
of the continuous increasing function $4C$. That is,
%
%e19 #&#
\begin{equation}\label{whatisa}
A_t = \inf\{ u \ge0\dvtx  4C_u \ge t \},  \qquad  t\ge0.
\end{equation}

%le5 #&#
\begin{lemma}\label{dimWF} There is an $n$-dimensional diffusion $\nu
$ such that the following time-change relationship holds:
%
%e20 #&#
\begin{equation}\label{getxi}
\nu_i(t)=\frac{X_i}{S}(A_t)  \quad\mbox{or}\quad X_i(t)= S(t) \nu
_i ( 4 C_t ), \qquad  t \ge0.
\end{equation}
The distribution of $\nu$ is supported on the unit simplex
\[
\usimp_n= \{ x_i \ge0\dvtx  x_1 + x_2 +\cdots+ x_n=1 \}.
\]
Conditional on the values of $G_1, \ldots, G_n$ and the process $S$,
the law of $\nu$ can be described as below.

Let $\pi$ be any permutation of $n$ labels. On the event $R_{\pi_1} >
R^*=R_{\pi_2} >\cdots> R_{\pi_n}$. Let $V_2 <\cdots< V_{n}$ be
defined by
\[
A_{V_i}= R^* - R_{\pi_{i}}   \quad    \mbox{or, equivalently} \quad
4C_{R^*-R_{\pi_i}}=V_i.
\]
Note that $V_2=0$.

For $i\ge2$ and $V_{i} \le t \le V_{i+1}$, the process $\nu$ is zero
on all coordinates except $(\pi_1, \ldots, \pi_i)$. The process $\nu
(\pi_1, \ldots, \pi_i)$, given the history of the process till time
$V_i$ (and the $G_i$'s and $S$), is distributed as the classical
Wright--Fisher diffusion starting from
\[
\frac{1}{S}(X_{\pi_1}, \ldots, X_{\pi_i}) (A_{V_i}
)=\frac{1}{S}(X_{\pi_1}, \ldots, X_{\pi_i}) (R^*-R_{\pi_i}
),
\]
and with parameters $(\gamma_{\pi_1}, \ldots, \gamma_{\pi_i} )$ where
\[
\gamma_j = \theta_j/2 + 2,  \qquad  j=1,2,\ldots,n.
\]
\end{lemma}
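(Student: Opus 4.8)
The plan is to build the process $\nu$ essentially by the same time-change machinery as in Theorem \ref{bestimechange}, but now applied to the positive-dimensional BESQ processes $Y_i$ rather than to negative-dimensional ones, and then to unwind the time-change coordinate by coordinate using the fact that coordinates ``switch on'' at deterministic (conditionally on the $G_i$'s and $S$) times. First I would observe that the vector $(X_1,\ldots,X_n)$ defined in \eqref{whatisxi} is piecewise a vector of independent BESQ$(4+\theta_i)$ processes: on the time interval where exactly the coordinates $\pi_1,\ldots,\pi_i$ are ``active'' (i.e.\ have been switched on), the remaining coordinates are identically zero, and the active ones evolve according to the defining SDE \eqref{besqintro} with dimension $4+\theta_{\pi_j}$. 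Since the sum $S(t)$ is strictly positive for all $t\ge 0$ (the $\pi_1$-coordinate is a BESQ$(4+\theta_{\pi_1})$ started at $0$, which is instantaneously positive and never returns to $0$ when $4+\theta_{\pi_1}\ge 4\ge 2$), the clock $C_t$ in \eqref{whatisct} is finite and strictly increasing, so $A_t$ is a genuine time-change.

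Next I would apply It\^o's formula to $V_i(t)=X_i(t)/S(t)$ exactly as in the proof of Theorem \ref{bestimechange}: one gets, on each interval where a fixed set of coordinates is active,
\[
dV_i(t) = S(t)^{-1}\bigl[(4+\theta_i) - \Bigl(\textstyle\sum_{j\,\text{active}}(4+\theta_j)\Bigr) V_i(t)\bigr]\,dt + \sqrt{V_i(1-V_i)}\,dM_i(t),
\]
with $\langle M_i\rangle_t = 4C_t$ by the same computation of the quadratic variation of the martingale part (here one uses $\sum_{j\,\text{active}}V_j=1$). Performing the DDS time-change by the inverse $A$ of $4C$ turns this into the SDE
\[
d\nu_i(t) = \tfrac14\bigl[(4+\theta_i) - \bigl(\textstyle\sum_j(4+\theta_j)\bigr)\nu_i(t)\bigr]\,dt + \sqrt{\nu_i(1-\nu_i)}\,d\widetilde W_i(t),
\]
i.e.\ a classical Wright--Fisher diffusion with parameters $\gamma_j = (4+\theta_j)/2 = \theta_j/2+2$, restricted to the active coordinates. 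This gives the desired description on each interval $V_i\le t\le V_{i+1}$, once one checks that the image under $4C$ of the (random) switching time $R^*-R_{\pi_i}$ of coordinate $\pi_i$ is exactly $V_i$ — which is just the definition $A_{V_i}=R^*-R_{\pi_i}$, and $V_2=0$ because $R^*=R_{\pi_2}$. The starting point of the Wright--Fisher piece on $[V_i,V_{i+1}]$ is the normalized vector $\frac1S(X_{\pi_1},\ldots,X_{\pi_i})(A_{V_i})$ by continuity of the paths at the switching instant (the newly activated coordinate enters at value $0$, consistent with WF behaviour).

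The main obstacle I anticipate is the bookkeeping around the switching times: one must argue that the time-change $A$ is well-defined and continuous across each instant $R^*-R_{\pi_i}$, that $C$ does not blow up there (it does blow up at $R^*-R_{\pi_1}$, which is precisely why one is forced to condition on $R^*=R_{\pi_2}$ rather than $R_{\pi_1}$ — here Lemma \ref{logct} with $\theta=4+\theta_{\pi_1}>2$ is the relevant input, showing $\int_0^{\cdot} du/S(u)$ diverges logarithmically near the first switching time), and that the strong Markov property lets one restart the WF description conditionally on the history up to $V_i$. A secondary point is verifying that, conditionally on all the $G_i$'s and the whole path $S$, the pieces patch together into a genuine (time-inhomogeneous in dimension) diffusion $\nu$ on $\usimp_n$; this follows from uniqueness in law of the WF SDE inside each face of the simplex plus the fact that the dimension-raising times are, by construction, measurable with respect to the conditioning $\sigma$-field. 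I would also note explicitly that $\nu$ so constructed is absorbed at the vertex $e_{\pi_1}$ only in the limit, and that on $[V_n,\infty)$ all $n$ coordinates are active, giving the full-dimensional WF$(\theta_1/2+2,\ldots,\theta_n/2+2)$ from its random starting configuration.
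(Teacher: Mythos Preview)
Your proposal is correct and follows essentially the same route as the paper. The paper's proof is terse: it observes, exactly as you do, that conditionally on $G_1,\ldots,G_n$ the vector $(X_1,\ldots,X_n)$ is piecewise a collection of independent BESQ$(4+\theta_{\pi_j})$ processes on the successive intervals $[R^*-R_{\pi_i},\,R^*-R_{\pi_{i+1}}]$, and then, rather than redoing the It\^o/DDS computation, simply invokes \cite[Prop.~11]{vsm} on each interval to obtain the WF law and the conditional independence from $S$; the pieces are then concatenated. Your write-up unpacks that citation inline by repeating the calculation of Theorem~\ref{bestimechange} with the sign of the drift reversed, which is exactly what Proposition~11 of \cite{vsm} does. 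One small wording issue: when you say ``it does blow up at $R^*-R_{\pi_1}$'', that quantity is negative since $R^*=R_{\pi_2}<R_{\pi_1}$; what you mean (and what the paper says just above \eqref{whatisa}) is that had one instead taken $R^*=R_{\pi_1}$, then $S(0)=0$ and $C_t$ would diverge at $0^+$ by Lemma~\ref{logct}.
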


\begin{pf} The Gamma random variables $G_1, \ldots, G_n$ are
independent of the BESQ process $Y_1, \ldots, Y_n$.
Thus, conditional on $G_1, \ldots, G_n$, the vector of processes
$(X_1, \ldots, X_n)$ has the following description. For
\[
R^*- R_{\pi_i}\le t \le R^* - R_{\pi_{i+1}}, \qquad  i\ge2,
\]
all coordinates other than the $\pi_1$th, $\pi_2$th, \ldots, $\pi
_i$th are zero. And, $(X_{\pi_1}, \ldots,\break X_{\pi_i})$, conditioned
on the past, are independent BESQ processes of dimensions $(4+\theta
_{\pi_1}, \ldots, 4+\theta_{\pi_i})$ and starting from $(X_{\pi
_1}, \ldots, X_{\pi_i})(R^*- R_{\pi_i})$.

Thus, on this interval of time, the existence of the process $\nu$,
identifying its law as the WF law, and the claimed independence from
the process $S$, all follow from~\cite{vsm}, Proposition~11. The proof
of the lemma now follows by combining the argument over the distinct intervals.
\end{pf}

%le6 #&#
\begin{lemma}\label{timerevdimWF}
Consider the set-up in \eqref{whatisri}, \eqref{whatisxi} and \eqref
{whatisa}. Let $Z_1, Z_2, \ldots, Z_n$ be $n$ stochastic processes
defined such that $\{Z_i(t), 0\le t \le R^*\}$ is the time-reversal
of the process $\{X_i(t), 0\le t \le R^*\}$, conditioned on
$X_i(R^*)= z_i$. That is, conditioned on $X_i(R^*)=z_i$ for every $i$,
\[
Z_i(t) = X_i(R^*-t)=Y_i(R_i-t)^+   \qquad \mbox{for } 0 \le t \le R^*.
\]
Then $(Z_1, \ldots, Z_n)$ are independent BESQ processes of dimensions
$-\theta_1, \ldots, \break -\theta_n$, starting from $z_1, \ldots, z_n$,
and absorbed at the origin.
\end{lemma}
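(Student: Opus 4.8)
\emph{Proof plan.} The plan is to peel off one coordinate at a time and then invoke the time-reversal identity of Lemma~\ref{lem:trev}. The first observation is that the ordering event in \eqref{condperm}, the label $\pi_2$, the time $R^*$, and the times $R_1,\dots,R_n$ are all measurable functions of $(G_1,\dots,G_n)$ alone, whereas $Y_1,\dots,Y_n$ are mutually independent and independent of $(G_1,\dots,G_n)$. Since $X_i(R^*)=Y_i(R_i)$, conditioning on the ordering event together with $\{X_i(R^*)=z_i \text{ for all } i\}$ amounts to conditioning on $\bigcap_i\{Y_i(R_i)=z_i\}$ and on the ranking of the $R_i$'s; under this conditioning the pairs $(Y_i,R_i)$ remain mutually independent (the ranking only restricts each $R_i$ to a random interval determined by the other $R_j$'s). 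Hence it suffices to show, for each $i$ separately, that the conditional law of $\{Z_i(t)=Y_i((R_i-t)^+)\}$ given $Y_i(R_i)=z_i$ is $Q^{-\theta_i}_{z_i}$, absorbed at $0$. For $i=\pi_1$ one has $R_{\pi_1}>R^*$, so on the window $[0,R^*]$ one sees only an initial segment of such a path, and the assertion for that coordinate is the corresponding restriction.

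Fix $i$ and write $L_i=\sup\{s\ge 0:\,Y_i(s)=z_i\}$ for the last hitting time of $z_i$ by $Y_i$; since $Y_i\sim Q^{4+\theta_i}_0$ with $4+\theta_i\ge 4>2$, the process $Y_i$ is transient, $Y_i(s)>0$ for all $s>0$, and $L_i<\infty$ almost surely. By Lemma~\ref{lem:trev} the reversed process $\{Y_i((L_i-t)^+):t\ge 0\}$ has law $Q^{-\theta_i}_{z_i}$, absorbed at the origin with absorption time $L_i$. Comparing with the definition of $Z_i$, it is therefore enough to prove the process identity
\[
\bigl(Y_i(s):0\le s\le R_i\bigr)\ \text{given}\ Y_i(R_i)=z_i\ \stackrel{d}{=}\ \bigl(Y_i(s):0\le s\le L_i\bigr).
\]
I would establish this by disintegrating both sides over their terminal time. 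On the right, the last-exit decomposition of $Q^{4+\theta_i}_0$ at level $z_i$ — which may itself be read off from Lemma~\ref{lem:trev} by reversing a $Q^{-\theta_i}_{z_i}$ path conditioned on its absorption time and using the transition-density identity of part (ii) — gives that, conditionally on $L_i=r$, the path is a $Q^{4+\theta_i}_0$-bridge from $0$ to $z_i$ of length $r$, while $L_i\stackrel{d}{=}z_i/(2G)$ with $G\sim\mathrm{Gamma}(\theta_i/2+1)$ by Lemma~\ref{lem:trev}(i). On the left, the Markov property of $Q^{4+\theta_i}_0$ at time $R_i$ shows that, conditionally on $R_i=r$ and $Y_i(R_i)=z_i$, the path is again a $Q^{4+\theta_i}_0$-bridge from $0$ to $z_i$ of length $r$. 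So the two processes agree as soon as the two mixing measures — the conditional law of $R_i$ given $Y_i(R_i)=z_i$ and the law of $L_i$ — coincide.

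This last matching is the heart of the argument and the step I expect to cost the most care, since it is where the choice $G_i\sim\mathrm{Gamma}(\theta_i/2+1)$ in \eqref{whatisri} does its work. Using the explicit density $q_r(0,y)\propto r^{-(\theta_i/2+2)}\,y^{\theta_i/2+1}\,e^{-y/(2r)}$ of $Q^{4+\theta_i}_0$ out of the origin, one checks that the prescribed density of $R_i=z_i/(2G_i)$ is, as a function of $r$, proportional to $q_r(0,z_i)$ — the same, by Lemma~\ref{lem:trev}(i), as the density of $L_i$ — and the task is to verify that, after conditioning on $\{Y_i(R_i)=z_i\}$, $R_i$ is in fact realised as the last hitting time $L_i$ rather than merely as an independent time sharing its marginal, so that no reweighting of $R_i$ occurs and the bridges and their lengths match exactly. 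Once that identification is in place, Lemma~\ref{lem:trev} applied coordinatewise, together with the independence recorded in the first paragraph, yields the stated conclusion; and since the origin is an exit (not entry) boundary for $Q^{-\theta_i}_{z_i}$ by Lemma~\ref{scale}(i), absorbing each $Z_i$ upon first reaching $0$ is the correct convention.
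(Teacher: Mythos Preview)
Your strategy coincides with the paper's: reduce to a single coordinate and use BESQ bridge time-reversal. The paper is slightly more direct than your last-exit detour: it simply notes that, conditioned on $T=l$ and $Y(l)=z$, the path is the bridge $Q^{4+\theta,\,l}_{0\to z}$, whose time-reversal is $Q^{4+\theta,\,l}_{z\to 0}$; since $Q^{4+\theta}_z$ is $Q^{-\theta}_z$ conditioned never to hit $0$ (Lemma~\ref{scale}(ii)) and the origin is exit-not-entry, this bridge is exactly $Q^{-\theta}_z$ conditioned on $T_0=l$. The paper then closes in one line --- ``the lemma follows since the law of $T_0$ is exactly $z/2G$'' --- i.e., it mixes over $l$ with the \emph{prior} law of $R_i$, which equals the law of $T_0$.

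Your flagged worry about reweighting is well aimed, and the paper is silent at precisely that spot. The paper's implicit reading of the singular conditioning $\{X_i(R^*)=z_i\}$ is: first fix $R_i=l$, then condition $Y_i$ on $Y_i(l)=z_i$, and afterwards average over the original law of $R_i$; under that reading no Bayes update of $R_i$ occurs and the mixing law matches the $T_0$-law, so the argument closes. If instead one disintegrates jointly on $\{Y_i(R_i)=z_i\}$, the posterior density of $R_i$ at $l$ acquires the likelihood factor $q_l^{4+\theta_i}(0,z_i)$, which is itself proportional to $f_{R_i}(l)$, so the posterior becomes proportional to $f_{R_i}(l)^2$ and is \emph{not} the law of $L_i$. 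Thus your ``no reweighting'' step cannot be verified under the joint reading; the intended resolution is the former interpretation, and your plan is otherwise the paper's.
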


\begin{pf}
It suffices to prove the following:

\begin{claim*} Let $\{Y(t), t\ge0\}$ denote a BESQ
process of dimension $(4+\theta)$ starting from $0$. Fix a $z >0$. Let
$T$ be distributed as $z/2G$, where $G$ is a Gamma random variable with
parameter $(\theta/2+1)$. Then, conditioned on $T=l$ and $Y(l)=z$, the
time-reversed process $\{Y((l-s)^+), 0\le s < \infty\}$ is distributed
as $Q_z^{-\theta}$, absorbed at the origin, conditioned on $T_0=l$.
Here $T_0$ is the hitting time of the origin for $Q_z^{-\theta}$.
\end{claim*}

Once we prove this claim, the lemma follows since the law of $T_0$ is
exactly $z/2G$. See Lemma~\ref{lem:trev}.

\begin{pf*}{Proof of Claim}
For the case of $\theta=0$, this
is proved in~\cite{besselbridge}, page~447. The general proof is
exactly similar and we outline just the steps and give references
within~\cite{besselbridge} for the details.

For any $\theta\in\rr$, $t >0$, $x,y \ge0$, let $Q^{\theta
,t}_{x\rightarrow y}$ denote the law of the BESQ bridge of dimension
$\theta$, length $t$, from points $x$ to $y$. That is to say, if $Y$
follows $Q_x^\theta$, then $Q^{\theta,t}_{x\rightarrow y}$ is the law
of the process $\{ Y(s), 0\le s\le t \}$ conditioned on the event $\{
Y(t)=y\}$.

Now, BESQ bridges satisfy time-reversal~\cite{besselbridge}, page~446.
Thus, if we define $\widehat{P}$ to be the $P$-distribution of a
process $\{ X(t-s), 0\le s\le t\}$, then \mbox{$Q^{\theta,t}_{x\rightarrow
y}=\widehat Q^{\theta,t}_{y\rightarrow x}$}.

We consider the case when the dimension is $(4+\theta), \theta\ge
0$, $x=0, y=z >0$. Then
\[
Q^{4+\theta,t}_{z\rightarrow 0}=\widehat Q^{4+\theta,t}_{0\rightarrow z}.
\]

Now, from Lemma~\ref{scale} (also see~\cite{besselbridge}, Section 3, page~440), we know that $Q^{4+\theta}_z$ is $Q^{-\theta
}_z$ conditioned never to hit zero (or equivalently, $Q^{-\theta}_z$
can be interpreted as $Q_z^{4+\theta}$ conditioned to hit zero). Since
the origin is an exit distribution for $Q^{-\theta}_z$ and not an
entry (Lemma~\ref{scale}; see~\cite{besselbridge}, page~441, for the
details of these definitions), the conditional law $Q^{4+\theta
,t}_{z\rightarrow0}$ is nothing but $Q^{-\theta}_z$, conditioned on
$T_0=t$. This completes the proof.
\end{pf*}
\noqed
\end{pf}

The following is a more precise statement.

Let $(z_1, \ldots, z_n)$ be a point in the $n$-dimensional unit
simplex $\usimp_n$. Fix $n$ nonnegative parameters\vadjust{\goodbreak} $\delta_1, \ldots
, \delta_n$. Let $G_1, \ldots, G_n$ denote $n$ independent Gamma
random variables with parameters $\delta_1+1, \ldots, \delta_n+1$,
respectively. Define $R_i=z_i/2G_i$.

For any permutation $\pi$ of $n$ labels, condition on the event
$R_{\pi_1} > R_{\pi_2} >\cdots> R_{\pi_n}$, and let $R^*=R_{\pi_2}$.

Define the continuous process $S$ by prescribing $S(0)=Z_1(R_{\pi
_1}-R^*)$ where $Z_1$ is distributed as $Q_0^{4+2\delta_{\pi_1}}$,
and for any $t$ such that
\[
R^*-R_{\pi_i} \le t \le R^*-R_{\pi_{i+1}},\qquad i\ge2, \qquad  R_{\pi
_{n+1}}=0.
\]
Given the history, the process is distributed as a Bessel-square
process of dimension $\sum_{j=1}^i (4+ 2\delta_{\pi_j})$ starting
from $S(R^*-R_{\pi_i})$.

Define the stochastic clocks
\[
C_t = \int_{0}^t \frac{du}{S(u)}, \qquad\widehat C_t = \int
_{R^*-t}^{R^*} \frac{du}{S(u)}, \qquad  0\le t\le R^*,
\]
and let $\widehat A_t$ denote the inverse function of $4\widehat C_t$.
Let $V_2 <\cdots< V_{n}$ be defined by $4C_{R^*-R_{\pi_i}}=V_i$.
Note that $V_2=0$. The $4$ is a standardization constant that appears
due to the factor of $2$ in the diffusion coefficient in \eqref{besqintro}.

Define an $n$-dimensional process $\nu$, given $R_1, \ldots, R_n$,
and the process $S$.
For $i\ge2$ and $V_{i} \le t \le V_{i+1}$, the process $\nu$ is zero
on all coordinates, except possibly at indices $(\pi_1, \ldots, \pi
_i)$. At time zero, the process starts at the vector that is $1$ in the
$\pi_1$th coordinate and zero elsewhere.

Conditioned on the history till time $V_i$, the process $\{\nu(\pi_1,
\ldots, \pi_i)(t), V_i \le t \le V_{i+1}\}$ is distributed as the
classical Wright--Fisher diffusion, starting from $\nu(\pi_1, \ldots
, \pi_i)(V_i)$ and with parameters $(\gamma_{\pi_1}, \ldots, \gamma
_{\pi_i} )$, where
\[
\gamma_j = \delta_j + 2,  \qquad  j=1,2,\ldots,n.
\]

Finally, consider the conditional law of the process, conditioned on
the event
\[
S(R^*) \nu_i(4C_{R^*})= z_i   \qquad \mbox{for all } i=1,2,\ldots,n.
\]

%th7 #&#
\begin{theorem}\label{mainthm}
Define the time-reversed process
\[
\mu(t) = \nu( \widehat A \circ4C_{R^*-t} ),
\]
where $\circ$ denotes composition. Then this conditional stochastic
time-reversed process, until the first time any of the coordinates hit
zero, has a marginal distribution (when $G_i$'s and $S$ are integrated
out) $\operatorname{NWF}(\delta_1, \ldots, \delta_n)$ starting from $(z_1, \ldots, z_n)$.
\end{theorem}

%
%This leads to the proof of Theorem~\ref{mainthm}.
%
%_2} >\cdots> R_{\pi_n}$. Define the continuous process $S(t)$ given
%by $S(0)=0$; and for $R^*-R_{\pi_i} \le t \le R^*-R_{\pi_{i+1}}$,
%given the history, the process is distributed as BESQ of dimension
%$\sum_{j=1}^i (4+\theta_j)$ starting from $S(R^*-R_{\pi_i})$.
%
%Define the stochastic clocks
%%
%C_t = \int_0^t \frac{du}{S(u)}, \qquad\widehat C_t = \int
%_{R^*-t}^{R^*} \frac{du}{S(u)},\quad0\le t\le R^*.
%%
%and let $\widehat A_t$ denote the inverse function of $\widehat C_t$.
%
%Consider the process $\nu$ as in Lemma~\ref{dimWF} and consider the
%conditional law of the process, conditioned on the event
%%
%all}\quad i=1,2,\ldots,n.
%
%Define the time-reversed process
%%
%%
%where $\circ$ denotes composition.
%
%Then this conditional stochastic time-changed process is distributed as
%NWF with parameters ($-\theta_1/2, -\theta_2/2, \ldots, -\theta
%_n/n$), starting from $(z_1, \ldots, z_n)$ until the first time any of
%the coordinates hit zero.
%%
%}

\begin{pf}%{Proof of Theorem~\ref{mainthm}}
We start with given values of $R_{\pi_1} > R_{\pi_2} >\cdots>
R_{\pi_n}$ and the process $S$ and apply equation \eqref{getxi} in
Lemma~\ref{dimWF} to obtain the processes $(X_1, \ldots, X_n)$,
defined by
\[
X_i(t) = S(t) \nu_i(4C_t),  \qquad  0\le t \le R^*.
\]
Then, the vector $(X_1, X_2, \ldots, X_n)$ has the law prescribed by
\eqref{whatisxi}.\vadjust{\goodbreak}

Now we apply Lemma~\ref{timerevdimWF} to obtain $(Z_1, \ldots, Z_n)$
by conditioning $(X_1, \ldots, X_n)$ and reversing time.
Finally the construction in Theorem~\ref{bestimechange} gives us the
vector $(\mu_1, \ldots, \mu_n)$ from $(Z_1, \ldots, Z_n)$, as desired.
\end{pf}

%s4 ###
%s4 #&#
\section{Exit density}\label{sec:exitdensity}

Let $Z_1, Z_2, \ldots, Z_n$ be independent BESQ processes of
dimensions $-\theta_1, \ldots, -\theta_n$, where each $\theta_i \ge
0$. We assume that at time zero, the vector $\vZ=(Z_1, \ldots, Z_n)$
starts from a point $\mathbf{z}=(z_1, \ldots, z_n)$ where every $z_i
> 0$. Define $T_i$ to be the first hitting time of zero for the process
$Z_i$, and let $\tau=\bigwedge_{i} T_i$ denote the first time any
coordinate hits zero. We would like to determine the joint distribution
of $(\tau, \vZ(\tau))$.

Note that since each $T_i$ is a continuous random variable, the minimum
is attained at a unique $i$. Thus, for a fixed $1\le i \le n$,
conditioned on the event $\tau=T_i$, the distribution of $Z_i(\tau)$
is the unit mass at zero, and the distribution of every other $Z_j(\tau
)$ is supported on $(0, \infty)$. Now, let $h_i$ denote the density of
the stopping time $T_i$ on $(0,\infty)$, and let $q_t^{-\theta}$
refer to the transition density of $Q^{-\theta}$. It follows that for
any $a_j > 0$, $j\neq i$, we get
%
%e21 #&#
\begin{eqnarray}
&&P \bigl( \tau=T_i, \tau\le t, Z_j(\tau) \ge a_j  \mbox{ for all
$j \neq i$}\bigr)\nonumber\\
&& \qquad =P \bigl( T_i \le t, T_j > T_i, Z_j(T_i) \ge a_j  \mbox{ for all
$j\neq i$} \bigr)\nonumber\\
&& \qquad = \int_0^t h_i(s) \prod_{j\neq i} P \bigl( T_j > s, Z_j(s) \ge a_j
\bigr)\,ds= \int_0^t h_i(s) \prod_{j\neq i}P \bigl( Z_j(s) \ge a_j
\bigr)\,ds  \nonumber\\
 \eqntext{\mbox{since $a_j > 0$}}\\
&& \qquad = \int_0^t h_i(s)\biggl [ \prod_{j\neq i} \int_{a_j}^\infty
q_s^{-\theta_j}(z_j, y_j)\,dy_j \biggr]\,ds.\nonumber
\end{eqnarray}

Our first job is to find closed form expressions of the integral above.
To do this we start by noting that $T_i$ is distributed as $z_i/ 2G_i$
(see Lemma~\ref{lem:trev}), where $G_i$ is a Gamma random variable
with parameter $(4+\theta_i)/2-1=\theta_i/2 + 1$. That is, the
density of $G_i$ is supported on $(0,\infty)$ and is given by
\[
\frac{y^{\theta_i/2}}{\Gamma(\theta_i/2 + 1)}e^{-y}.
\]
It follows that
\[
h_i(s)= \frac{(z_i/2)^{\theta_i/2+1}}{\Gamma(\theta_i/2 + 1)}
s^{-\theta_i/2 -2} e^{-z_i/2s},  \qquad  0 \le s < \infty.
\]

On the other hand, it follows from time reversal (Lemma \ref
{lem:trev}) that $q_s^{-\theta_j}(z_j, \break y_j)= q_{s}^{4+\theta_j}(y_j,
z_j)$. For any positive $a$, the transition density $q^{a}_s(y,z)$ is
explicitly known (see, e.g.,~\cite{vsm}) to be $s^{-1}f(z/s, a, y/s)$,
where $f(\cdot,k,\lambda)$ is the density of a noncentral Chi-square
distribution with $k$-degrees of freedom and a noncentrality parameter
value $\lambda$. In particular, it can be written as a Poisson mixture
of central Chi-square (or, Gamma) densities. Thus we have the following
expansion:
%
%e22 #&#
\begin{equation}\label{tranden}
 \qquad q_s^{-\theta_j}(z_j, y_j)= q_{s}^{4+\theta_j}(y_j, z_j)=s^{-1}\sum
_{k=0}^{\infty} e^{-y_j/2s}\frac{(y_j/2s)^k}{k!} g_{\theta_j+4+2k}(z_j/s),
\end{equation}
where $g_r$ is the Gamma density with parameters $(r/2, 1/2)$. That is,
\[
g_r(x)= \frac{2^{-r/2} x^{r/2-1}}{\Gamma(r/2)} e^{-x/2},  \qquad  x\ge0.
\]

Now, define
\[
\ymini=\sum_{j\neq i} y_j,  \qquad \tmini= \sum_{j\neq i}\theta
_j, \qquad \zmini=\sum_{j\neq i} z_j.
\]
Thus
\begin{eqnarray*}
 &&h_i(s)\prod_{j\neq i} q_s^{-\theta_j}(z_j, y_j)\\
 && \qquad = \frac
{(z_i/2)^{\theta_i/2+1}}{\Gamma(\theta_i/2 + 1)} s^{-\theta_i/2 -2}
e^{-z_i/2s} \prod_{j\neq i} s^{-1}\sum_{k=0}^{\infty}
e^{-y_j/2s}\frac{(y_j/2s)^k}{k!} g_{\theta_j+4+2k}(z_j/s)\\
 && \qquad =\frac{(z_i/2)^{\theta_i/2+1}}{\Gamma(\theta_i/2 + 1)} s^{-\theta
_i/2 -2} e^{-z_i/2s} \\
 && \qquad  \quad  {}\times\prod_{j\neq i} s^{-1}\sum_{k=0}^{\infty}
e^{-y_j/2s}\frac{(y_j/2s)^k}{k!} \frac{2^{-\theta_j/2-2-k}
(z_j/s)^{\theta_j/2+k+1}}{\Gamma(\theta_j/2 + 2 +k)} e^{-z_j/2s}\\
 && \qquad =\frac{(z_i/2)^{\theta_i/2+1}}{\Gamma(\theta_i/2 + 1)} s^{-\theta
_i/2 - 2-(n-1)} e^{-z_i/2s}\\
 && \qquad  \quad  {}\times e^{-(\ymini+ \zmini)/2s}2^{-\tmini/2-2(n-1)} \prod_{j\neq i} \sum
_{k=0}^{\infty} \frac{(y_j/2s)^k}{k!} \frac{2^{-k} (z_j/s)^{\theta
_j/2+k+1}}{\Gamma(\theta_j/2 + 2 +k)}.
\end{eqnarray*}

We now exchange the product and the sum in the above. We will need some
more notations for a compact representation.
For any two vectors $a$ and $b$, denote by
\[
a^b = \prod_{i} a_i^{b_i},  \qquad  a!=\prod_{i} a_i!.
\]
Also let $\vtheta_i, \vy_i, \mathbf{z}_i$ stand for the vectors
$(\theta_j, j\neq i)$, $(y_j, j\neq i)$ and $(z_j, j\neq i)$,
respectively.

Let $\vk$ denote the vector $(k_j, j\neq i)$, where every $k_j$
takes any nonnegative integer values. Let $\vk'1$ be the sum of the
coordinates of $\vk$. Then
\begin{eqnarray*}
&&\prod_{j\neq i}  \sum_{k=0}^{\infty} \frac{(y_j/2s)^k}{k!} \frac
{2^{-k} (z_j/s)^{\theta_j/2+k+1}}{\Gamma(\theta_j/2 + 2 +k)} \\
&& \qquad = \sum_{N=0}^{\infty} (4s)^{-N} s^{-\tmini/2 - N - (n-1)} \mathbf
{z}_i^{\vtheta_i/2+1}\sum_{\vk'1=N} \frac{\vy_i^\vk}{\vk!} \frac
{\mathbf{z}_i^{\vk}}{\prod_{j\neq i} \Gamma(\theta_j/2 + 2 + k_j)}.
\end{eqnarray*}

Thus, combining the expressions, we get
%
%e23 #&#
\begin{eqnarray}\label{inter1}
&&h_i(s) \prod_{j\neq i} q_s^{-\theta_j}(z_j, y_j)\nonumber\\
&& \qquad =\frac{z_i^{\theta
_i/2+1}}{\Gamma(\theta_i/2 + 1)}2^{-\theta_i/2 -1 -\tmini
/2-2(n-1)} \\
&& \qquad  \quad {}\times s^{-\theta_i/2 - 2-(n-1)} e^{-z_i/2s} e^{-(\ymini+ \zmini)/2s}
\sum_{N=0}^{\infty} 4^{-N} s^{-\tmini/2 - 2N - (n-1)}B_N,\nonumber
\end{eqnarray}
where
\[
B_N= \mathbf{z}_i^{\vtheta_i/2+1}\sum_{\vk'1=N} \frac{\vy_i^\vk
}{\vk!} \frac{\mathbf{z}_i^{\vk}}{\prod_{j\neq i} \Gamma(\theta
_j/2 + 2 + k_j)}.
\]

We can now integrate over $s$ in \eqref{inter1} to obtain
\begin{eqnarray*}
\int_0^\infty h_i(s) \prod_{j\neq i} q_s^{-\theta_j}(z_j, y_j)\,ds=
\sum_{N=0}^\infty B'_N \int_0^\infty s^{-a_N} e^{-b/s}\,ds,
\end{eqnarray*}
where
%
%e24 #&#
%e25 #&#
%e26 #&#
\begin{eqnarray}
B_N' &=& \frac{z_i^{\theta_i/2+1}}{\Gamma(\theta_i/2 +
1)}2^{-\theta_0/2-2n+1} 4^{-N} B_N,  \qquad \theta_0=\sum_{i=1}^n
\theta_i,\\
a_N &=& \theta_i/2 + \tmini/2 +2n + 2N= \theta_0/2 + 2n + 2N,\\
b &=& z_i/2 + (\ymini+\zmini)/2 = (\ymini+ z_0)/2,  \qquad  z_0=\sum
_{i=1}^n z_i.
\end{eqnarray}

Now a simple change of variable $w=1/s$ shows
\begin{eqnarray*}
&\displaystyle \int_0^\infty s^{-a_N} e^{-b/s}\,ds = \int_0^\infty
w^{a_N}e^{-bw}w^{-2}\,dw=\int_0^\infty w^{a_N -2} e^{-bw}\,dw,&\\
&\displaystyle \frac{\Gamma(a_N-1)}{b^{a_N-1}} \int_0^{\infty} \frac
{b^{a_N-1}}{\Gamma(a_N-1)} w^{a_N -2} e^{-bw}\,dw = \frac{\Gamma
(a_N-1)}{b^{a_N-1}}.&
\end{eqnarray*}

Since the $i$th coordinate of the exit point is zero, one can define
$y_i=0$ and $y_0=\sum_{j=1}^n y_j= \ymini$ to simplify notation.
Thus we obtain
\begin{eqnarray*}
&&\int_0^\infty h_i(s)\prod_{j\neq i} q_s^{-\theta_j}(z_j, y_j)\,ds\\
 && \qquad =
\sum_{N=0}^\infty\frac{z_i^{\theta_i/2+1}}{\Gamma(\theta_i/2 +
1)}2^{-\theta_0/2-2n+1} 4^{-N}B_N \frac{\Gamma(a_N-1)}{b^{a_N-1}}\\
&&  \qquad  =\frac{z_i^{\theta_i/2+1}\mathbf{z}_i^{\vtheta_i/2+1}}{\Gamma
(\theta_i/2 + 1)}2^{-\theta_0/2-2n+1} \sum_{N=0}^\infty\bigl(
(\ymini+z_0)/2 \bigr)^{-\theta_0/2 - 2n - 2N + 1}\\
&& \qquad  \quad {}\times\Gamma(\theta_0/2 + 2n + 2N-1) 4^{-N}\sum_{\vk'1=N} \frac
{\vy_i^\vk}{\vk!} \frac{\mathbf{z}_i^{\vk}}{\prod_{j\neq i}
\Gamma(\theta_j/2 + 2 + k_j)}\\
&& \qquad =\frac{\mathbf{z}^{\vtheta/2+1}}{\Gamma(\theta_i/2 +
1)}2^{-\theta_0/2-2n+1} \sum_{N=0}^\infty( y_0 + z_0
)^{-\theta_0/2 - 2n - 2N + 1} 2^{\theta_0/2 + 2n + 2N -1}\\
&& \qquad  \quad {}\times\Gamma(\theta_0/2 + 2n + 2N-1) 4^{-N}\sum_{\vk'1=N} \frac
{\vy_i^\vk}{\vk!} \frac{\mathbf{z}_i^{\vk}}{\prod_{j\neq i}
\Gamma(\theta_j/2 + 2 + k_j)}\\
&& \qquad =\frac{\mathbf{z}^{\vtheta/2+1}}{\Gamma(\theta_i/2 + 1)} \sum
_{N=0}^\infty( y_0 + z_0 )^{-\theta_0/2 - 2n - 2N +
1}\\
&& \qquad  \quad {}\times \Gamma(\theta_0/2 + 2n + 2N-1)\sum_{\vk'1=N} \frac{\vy_i^\vk
}{\vk!} \frac{\mathbf{z}_i^{\vk}}{\prod_{j\neq i} \Gamma(\theta
_j/2 + 2 + k_j)}.
\end{eqnarray*}

We have the following result.

%th8 #&#
\begin{theorem}\label{thm:besexit}
Let $Z_1, Z_2, \ldots, Z_n$ be independent BESQ processes of
dimensions $-\theta_1, \ldots, -\theta_n$, where each $\theta_i\ge
0$. Assume that $Z_i(0)= z_i(0) > 0$, for every~$i$.

The distribution of $(\tau, Z({\tau}))$ is supported on the set
$(0,\infty)\times\bigcup_{i=1}^n H_i$, where $H_i$ is the subspace
orthogonal to the $i$th canonical basis vector $e_i$. That is,
\[
H_i = \{ (y_1, y_2, \ldots, y_n)\dvtx  y_i=0 \}.
\]

\begin{longlist}[(ii)]
\item[(i)] Let $G_i, i=1,2,\ldots,n$ be independent Gamma random
variables with parameters $\theta_i/2 + 1, i=1,2,\ldots, n$.
The law of $\tau$ is the same as that of $\min_{i} \frac{z_i}{2G_i}$ and
\[
P ( \tau=T_i )= P \biggl( \frac{G_i}{z_i} > \frac
{G_j}{z_j}  \mbox{ for all $j\neq i$} \biggr),
\]
where $T_i$ is the first hitting time of $H_i$.\vadjust{\goodbreak}
\item[(ii)] The restriction of the law of the random vector $Z(\tau
)$, restricted to the hyperplane $H_i$, admits a density with respect
to all the variables $y_j$'s, $j\neq i$, which is given by
%
%e27 #&#
\begin{eqnarray}\label{eq:eden}
&=&\frac{S^{1-\theta_0/2-2n}}{\Gamma(\theta_i/2 + 1)}\prod_{j=1}^n
z_j^{\theta_j/2 + 1}\sum_{N=0}^\infty\Gamma(\theta_0/2 + 2n +
2N-1) S^{- 2N}\nonumber
\\[-8pt]
\\[-8pt]
&&{}\times\sum_{\sum_{j\neq i} k_j=N} \prod_{j\neq i} \frac{(y_j
z_j)^{k_j}}{k_j! \Gamma(\theta_j/2 + 2 + k_j)}.
\nonumber
\end{eqnarray}
Here
\[
S=\sum_{i=1}^n (y_i + z_i), \qquad y_i=0, \qquad\theta_0=\sum_{i=1}^n
\theta_i.
\]
\end{longlist}
\end{theorem}

Using Theorem~\ref{bestimechange}, we get that the exit distribution
of $\operatorname{NWF}(\delta_1, \ldots, \delta_n)$, starting from a point $(z_1,
\ldots, z_n)\in\usimp_n$, is the image under the map
\[
x_i \mapsto\frac{x_i}{\sum_{j=1}^n x_j}, \qquad1\le i\le n,
\]
of the exit density of independent BESQ processes of dimensions
$-\theta_1, \ldots, -\theta_n$, where each $\theta_i=2\delta_i$.

%th9 #&#
\begin{theorem}\label{thm:exitnwf}
The exit density of $\mu\sim$ $\operatorname{NWF}(\delta_1, \ldots, \delta_n)$
starting from $(z_1, \ldots,  z_n)\in\usimp_n$ is supported on the
set $\bigcup_{i=1}^n F_i$, where $F_i$ is the face $\{ x \in\usimp_n:
x_i=0 \}$, and admits the following description:
\begin{longlist}[(ii)]
\item[(i)] Let $G_i, i=1,2,\ldots,n$, be independent Gamma random
variables with parameters $\delta_i + 1, i=1,2,\ldots, n$.
Then
%
%e28 #&#
\begin{equation}\label{eq:exithyp}
P ( \mbox{$\mu$ exits through $F_i$} )= P \biggl( \frac
{G_i}{z_i} > \frac{G_j}{z_j}  \mbox{ for all $j\neq i$} \biggr).
\end{equation}

\item[(ii)] Let $\delta$ represent the vector $(\delta_1, \ldots,
\delta_n)$, and let $\delta_0=\sum_{i=1}^n \delta_i$. The exit
distribution of the process $\mu$, restricted to $F_i$, admits a
density with respect to all the variables $x_j$'s, $j\neq i$, which is
given by
%
%e29 #&#
\begin{equation}\label{eq:exitnwf}
(\delta_i + 1) \sum_{N=0}^\infty\frac{\Gamma(N+n+\delta
_0)}{\Gamma(N+2n + \delta_0)} \sum_{\sum_{j\neq i} k_j=N} \diri
_n(z;\vk+ \delta+\vtwo) \diri_{n-1}(x;\vk+\vone).\hspace*{-35pt}
\end{equation}
Here the inner sum above is over all nonnegative integers $(k_j, j
\neq i)$, such that $\sum_{j\neq i}k_j =N$. The vector $\vk$
represents a vector whose $j$th coordinate is $k_j$ for all $j \neq i$,
and $\vk_i=0$. The vectors $\vk+\delta+\vtwo$ and $\vk+\vone$
represent vector additions of $\vk$, $\delta$ and the vector of all
twos, and $\vk$ and the vector of all ones, respectively. The factor
$\diri_{n-1}$ is a density with respect to the $(n-1)$-dimensional
vector $(x_j, j\neq i)$ with corresponding parameters $(\vk
_{j}+1, j \neq i)$. It can also be interpreted as the conditional
density of the $n$-dimensional $\diri_n(x;\vk+1)$, conditioned on $x_i=0$.
\end{longlist}
\end{theorem}

Note that the density in \eqref{eq:exitnwf} is a mixture of Dirichlet
densities, strikingly similar to those appearing as transition
probabilities of the Wright--Fisher diffusions themselves. See
Griffiths~\cite{griffiths79b}, Barbour, Ethier and Griffiths \cite
{BEG} and Pal~\cite{vsm}.

\begin{pf*}{Proof of Theorem~\ref{thm:exitnwf}} This is a
straightforward integration. We have assumed that $\sum_i z_i=1$.
Thus, $S=1+\sum_j y_j$; define $y_0=\sum_j y_j$, and
\[
x_j = y_j/y_0, \qquad1\le j \le n.
\]
Hence \eqref{eq:eden} simplifies to
%
%e30 #&#
\begin{eqnarray}
&=&\frac{(1+y_0)^{1-\theta_0/2-2n}}{\Gamma(\theta_i/2 + 1)}\prod
_{j=1}^n z_j^{\theta_j/2 + 1}\sum_{N=0}^\infty\Gamma(\theta_0/2 +
2n + 2N-1) (1+y_0)^{-2 N} \nonumber
\\[-8pt]
\\[-8pt]
&&{}\times y_0^N\sum_{\sum_{j\neq i} k_j=N} \prod_{j\neq i} \frac{(x_j
z_j)^{k_j}}{k_j! \Gamma(\theta_j/2 + 2 + k_j)}.
\nonumber
\end{eqnarray}

Now, to get to formula \eqref{eq:exitnwf} we need to make a
multivariate change of variables. Without loss of generality, let
$i=n$. Then, for any $y\in F_i$, we have $y_{n}=0$. Define the change
of variables
\[
(y_1, \ldots, y_{n-2}, y_{n-1}) \mapsto( y_0,x_1, \ldots,
x_{n-2} ).
\]
In other words, $y_i= y_0 x_i$ for all $i=1,2,\ldots, n-2$ and
$y_{n-1}=y_0(1-x_1-\cdots- x_{n-2})$. The determinant of the
well-known Jacobian matrix is given by $y_0^{n-2}$.

Thus, the density of $(x_1, \ldots, x_n)$ restricted to $F_i$ is given by
%
%e31 #&#
\begin{eqnarray}\label{denform1}
&&\frac{1}{\Gamma(\theta_i/2 + 1)} \prod_{j=1}^n z_j^{\theta_j/2 +
1}\sum_{N=0}^\infty\Gamma(\theta_0/2 + 2n + 2N-1)\nonumber
\\
&& \qquad {}\times\int_0^\infty y^{N+n-2}(1+y)^{1-\theta_0/2-2n-2N}\,dy \\
&& \qquad {}\times\sum
_{\sum_{j\neq i} k_j=N} \prod_{j\neq i} \frac{(x_j z_j)^{k_j}}{k_j!
\Gamma(\theta_j/2 + 2 + k_j)}.
\nonumber
\end{eqnarray}
The following formula is easily verifiable for $\alpha\ge0$, $\beta>
\alpha+1$:
\[
\int_0^\infty y^{\alpha} (1+y)^{-\beta}\,dy=\int_0^1 x^{\beta
-\alpha-2}(1-x)^{\alpha}\,dx=B(\alpha+1,\beta-\alpha-1),
\]
where $B$ refers to the Beta function.\vadjust{\goodbreak}

In other words, \eqref{denform1} reduces to
%
%e32 #&#
\begin{eqnarray}\label{denform2}
&&\frac{1}{\Gamma(\theta_i/2 + 1)}\prod_{j=1}^n z_j^{\theta_j/2 +
1}\sum_{N=0}^\infty\Gamma(\theta_0/2 + 2n + 2N-1)\nonumber
\\[-9pt]
\\[-9pt]
&& \qquad {}\times B(N+n-1, N+n+\theta_0/2) \sum_{\sum_{j\neq i} k_j=N} \prod_{j\neq
i} \frac{(x_j z_j)^{k_j}}{k_j! \Gamma(\theta_j/2 + 2 + k_j)}.
\nonumber
\end{eqnarray}

We now change $\theta_i/2$ to $\delta_i$ and rewrite the above
expression in terms of Dirichlet densities. We use the notations in the
statement of Theorem~\ref{thm:exitnwf}: the vector $\vk$ represents a
vector whose $j$th coordinate is $k_j$ for all $j \neq i$, and $\vk
_i=0$. The vectors $\vk+\delta+\vtwo$ and $\vk+\vone$ represent
vector additions of $\vk$, $\delta$ and the vector of all twos, and
$\vk$ and the vector of all ones, respectively. The factor $\diri
_{n-1}$ is a density with respect to the $(n-1)$-dimensional vector
$(x_j, j\neq i)$ with corresponding parameters $(\vk_{j}+1, j
\neq i)$. It can also be interpreted as the conditional density of the
$n$-dimensional $\diri_n(x;\vk+1)$, conditioned on $x_i=0$.

Hence, for any $(k_j, j\neq i)$, integers
\begin{eqnarray*}
&&\frac{z_i^{\delta_i+1}}{\Gamma(\delta_i+1)} \prod_{j\neq i} \frac
{z_j^{k_j+\delta_j+1}}{\Gamma(\delta_j + 2 + k_j)} \frac
{x_j^{k_j}}{k_j!}\\[-2pt]
&& \qquad  =\frac{(\delta_i+1)}{\Gamma(\delta_0+N+2n)\Gamma(N+n-1)}\\[-2pt]
&& \qquad  \quad {}\times \diri
_n(z;\vk+\delta+\vtwo) \diri_{n-1}(x; \vk+\vone).
\nonumber
\end{eqnarray*}
Thus \eqref{denform2} reduces to
%
%e33 #&#
\begin{eqnarray}\label{denform3}
&&(\delta_i+1)  \sum_{N=0}^\infty\frac{\Gamma(\delta_0 + 2n + 2N-1)
B(N+n-1, N+n+\delta_0) }{\Gamma(\delta_0+N+2n)\Gamma(N+n-1)}\nonumber
\\[-9pt]
\\[-9pt]
&& \qquad {}\times\sum_{\vk'\vone=N} \diri_n(z;\vk+\delta+\vtwo) \diri
_{n-1}(x; \vk+\vone).
\nonumber
\end{eqnarray}

However,
\begin{eqnarray*}
&&\frac{\Gamma(\delta_0 + 2n + 2N-1) B(N+n-1, N+n+\delta_0) }{\Gamma
(\delta_0+N+2n)\Gamma(N+n-1)}\\[-2pt]
&& \qquad = \frac{\Gamma(\delta_0 + 2n + 2N-1)}{\Gamma(\delta_0+N+2n)\Gamma
(N+n-1)}\frac{\Gamma(N+n-1)\Gamma(N+n+\delta_0)}{\Gamma
(2N+2n+\delta_0-1)}\\[-2pt]
&& \qquad =\frac{\Gamma(N+n+\delta_0)}{\Gamma(N+2n+\delta_0)}.
\end{eqnarray*}
This completes the proof of formula \eqref{eq:exitnwf}.

The probability in \eqref{eq:exithyp} is a direct consequence of
Theorem~\ref{thm:besexit} conclusion~(i).\vadjust{\goodbreak}
\end{pf*}

%s5 ###
%s5 #&#
\section{Exit time}\label{sec:exittime}

Let $X=(X_1, \ldots, X_n)$ be distributed as $\operatorname{NWF}(-\theta_1/2, \ldots,\break
 -\theta_n/2)$ starting from a point $(x_1, \ldots, x_n)$ in the
unit simplex. Let $\sigma_0$ denote the stopping time
\[
\sigma_0 = \inf\{ t\ge0\dvtx  X_i =0 \mbox{ for some $i$}
\}.
\]
Our objective is to find estimates on the law of $\sigma_0$.

We will simplify the situation by assuming that all $x_i=1/n$ and all
$\theta_i=\theta$. To this end we use the time-change relationship in
Theorem~\ref{bestimechange}. Let $Z=(Z_1, \ldots, Z_n)$ be
independent BESQ processes starting from $(z_1, \ldots, z_n)$ as in
the set-up of Theorem~\ref{bestimechange}, where each $z_i$ is now
one. Then
%
%e34 #&#
\begin{equation}\label{whatissigma0}
\sigma_0 = 4\int_0^{\tau} \frac{ds}{\zeta(s)}, \qquad\zeta
(s)=\sum_{i=1}^n Z_i(s).
\end{equation}

By Theorem~\ref{thm:besexit}, the distribution of $\tau$ is the same
as considering $n$ i.i.d. $\operatorname{Gamma}(\theta/2 +1)$ random variables $G_1,
\ldots, G_n$, and defining
%
%e35 #&#
\begin{equation}\label{whatistau}
\tau=\frac{1}{2\max_i G_i}.
\end{equation}

Our first step will be to prove a concentration estimate of $\max_i G_i$.

%le10 #&#
\begin{lemma}\label{maxmom}
Let $G_1, G_2, \ldots, G_n$ be $n$ i.i.d. Gamma random variables with
parameter $r/2$, for some $r\ge2$. Let $\chi$ be the random variable
$\max_i G_i$.
Then, as $n$ tends to infinity,
\[
E\sqrt{\chi} = \Theta\bigl( \sqrt{\log n} \bigr).
\]
\end{lemma}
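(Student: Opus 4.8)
The plan is to obtain matching upper and lower bounds for $E\sqrt{\chi}$ separately, each of order $\sqrt{\log n}$. Write $F$ for the distribution function of a single Gamma$(r/2,1)$ variable and recall the standard tail asymptotics $1-F(x) \sim x^{r/2-1}e^{-x}/\Gamma(r/2)$ as $x\to\infty$; correspondingly the density satisfies $f(x)\sim x^{r/2-1}e^{-x}/\Gamma(r/2)$. The key point is that the maximum of $n$ iid light-tailed (sub-exponential with rate $1$) variables concentrates around $\log n$, so that $\chi/\log n \to 1$ in probability and, after checking uniform integrability of $\sqrt{\chi}/\sqrt{\log n}$, also in $\mathbb{L}^1$; squaring the normalizing constant costs only a bounded factor, giving $E\sqrt{\chi} = \Theta(\sqrt{\log n})$.

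For the \emph{upper bound} I would use $E\sqrt{\chi} \le \sqrt{E\chi}$ by Jensen, so it suffices to show $E\chi = O(\log n)$. Write $E\chi = \int_0^\infty P(\chi > x)\,dx = \int_0^\infty \bigl(1-(F(x))^n\bigr)\,dx$. Split the integral at $x_0 = 2\log n$ (say): on $[0,x_0]$ the integrand is at most $1$, contributing $O(\log n)$; on $[x_0,\infty)$ use $1-(F(x))^n \le n(1-F(x)) \le C n x^{r/2-1} e^{-x}$ for a constant $C=C(r)$, and the resulting integral $\int_{2\log n}^\infty C n x^{r/2-1}e^{-x}\,dx$ is $o(1)$ since $n e^{-2\log n} = n^{-1}$ dominates the polynomial factor. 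Hence $E\chi \le (2 + o(1))\log n$, giving $E\sqrt{\chi} = O(\sqrt{\log n})$.

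For the \emph{lower bound} it is enough to produce a single level $x_1 = x_1(n)$ with $x_1 \sim \log n$ such that $P(\chi \ge x_1) \ge c > 0$ for all large $n$, because then $E\sqrt{\chi} \ge \sqrt{x_1}\,P(\chi \ge x_1) \ge c\sqrt{x_1} = \Omega(\sqrt{\log n})$. Take $x_1 = \log n - (r)\log\log n$, say, or more carefully $x_1 = \log n - \tfrac{r}{2}\log\log n$. Using $1-F(x_1) \ge c' x_1^{r/2-1} e^{-x_1}/\Gamma(r/2)$ one checks $n(1-F(x_1)) \to \infty$ (the $e^{-x_1}$ factor contributes $e^{-\log n + (r/2)\log\log n} = (\log n)^{r/2}/n$, which beats the polynomial $x_1^{r/2-1}\sim(\log n)^{r/2-1}$), and then
\[
P(\chi < x_1) = (F(x_1))^n = \bigl(1-(1-F(x_1))\bigr)^n \le \exp\bigl(-n(1-F(x_1))\bigr) \to 0.
\]
So $P(\chi \ge x_1) \to 1$, which is even stronger than needed.

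I expect the only mildly delicate step to be bookkeeping the Gamma tail constants (the $\Gamma(r/2)$ and the requirement $r \ge 2$, which guarantees the polynomial exponent $r/2-1 \ge 0$ so the tail bounds above are monotone in the convenient direction and the density is bounded near zero). Everything else is the textbook extreme-value heuristic for exponential-type tails, and the passage from convergence in probability to the stated $\Theta(\sqrt{\log n})$ expectation asymptotics is handled cleanly by the two one-sided estimates above, so no genuine uniform-integrability argument is even required.
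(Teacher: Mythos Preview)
Your argument is correct; both the upper bound via Jensen plus the layer-cake split at $2\log n$, and the lower bound via a single level $x_1=\log n-\tfrac r2\log\log n$ with $P(\chi\ge x_1)\to 1$, go through as written (the tail constant $C=C(r)$ in $1-F(x)\le C x^{r/2-1}e^{-x}$ is all you need, and for the lower bound $n(1-F(x_1))\asymp(\log n)^{r-1}\to\infty$ since $r\ge 2$).

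The paper, however, takes a different and shorter route: it observes that for integer $r$ one may realize $2G_j$ as $\sum_{i=1}^r Z_j(i)^2$ with $\{Z_j(i)\}$ iid standard normals, and then sandwiches
\[
E\max_j |Z_j(1)| \;\le\; E\sqrt{2\chi} \;\le\; \sqrt{r}\,E\max_{i,j}|Z_j(i)|,
\]
invoking the classical fact that the expected maximum of $m$ iid $|N(0,1)|$ variables is $\sim\sqrt{2\log m}$; non-integer $r$ is handled by sandwiching between $\lfloor r\rfloor$ and $\lfloor r\rfloor+1$. So the paper trades your explicit Gamma-tail computation for the Gaussian connection. Your approach has the mild advantage of treating all $r\ge 2$ uniformly without the integer sandwich and of being self-contained, while the paper's argument is shorter and avoids any asymptotic bookkeeping for the incomplete Gamma function.
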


\begin{pf} First let $r\in\mathbb{N}$. Let $\{Z_1(i), \ldots,
Z_n(i), i=1,2,\ldots,r \}$ be a collection of i.i.d. standard
Normal random variables. Then $2G_j$ has the same law as $Z_j^2(1)
+\cdots+ Z_j^2(r)$. Hence
\[
E \max_j \abs{Z}_j(1) \le E \sqrt{2\chi} \le\sqrt{r} E \max
_{i,j} \abs{Z}_j(i).
\]
As $n$ tends to infinity, the right-hand side above converges to $\sqrt
{2r\log(rn)}$ while the left-hand side converges to $\sqrt{2\log n}$.
This completes the argument for $r\in\mathbb{N}$. For a general
positive $r$, bound on both sides by $\lfloor r \rfloor$ and $\lfloor
r \rfloor+ 1$.
\end{pf}

We also need a version of logarithmic Sobolev inequality for Gamma
random variables, which can be found in several articles, including
\cite{BW}.

%le11 #&#
\begin{lemma}[(\cite{BW}, page~2718)]\label{lem:logsobo}
Let $\mu^\theta$ denote the product probability measure of $n$ i.i.d. $\operatorname{Gamma}(\theta)$ random variables. Then, for every $f$ on $\rr^n$
which is in $C^1$ (i.e., once continuously differentiable), one has
%
%e36 #&#
\begin{equation}\label{logsobo}
\ent(f^2) \le4 \int\Biggl( \sum_{i=1}^n x_i ( \partial_i
f(x) )^2 \Biggr)\,d\mu^{\theta}(x).
\end{equation}
Here $\ent(\cdot)$ refers to the entropy defined by
\[
\ent(f^2)=\int f^2 \log(f^2)\,d\mu^\theta- \biggl( \int f^2\,d\mu
^\theta\biggr) \log\biggl(\int f^2\,d\mu^\theta\biggr).
\]
And $\partial_i$ refers to the partial derivative with respect to the
$i$th coordinate.
\end{lemma}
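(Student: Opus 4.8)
The plan is to derive \eqref{logsobo} from the Bakry--\'Emery criterion for a uniformly log-concave reference measure, after an explicit change of variables that absorbs the weights $x_i$ on the right-hand side; this recovers the inequality of \cite{BW}.

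First I would substitute $x_i=y_i^2/2$ on $(0,\infty)^n$. The one-dimensional map $x\mapsto y^2/2$ pushes the Gamma$(\theta)$ density $\Gamma(\theta)^{-1}x^{\theta-1}e^{-x}$ forward to $c_\theta\,y^{2\theta-1}e^{-y^2/2}$ on $(0,\infty)$, with $c_\theta=2^{1-\theta}/\Gamma(\theta)$; hence $\mu^\theta$ is the image of the product measure $\tnu^\theta(dy):=\prod_{i=1}^n c_\theta\,y_i^{2\theta-1}e^{-y_i^2/2}\,dy_i$ under $\varphi(y)=(y_1^2/2,\ldots,y_n^2/2)$. Given $f\in C^1$, set $g=f\circ\varphi\in C^1$. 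Since $\varphi$ pushes $\tnu^\theta$ forward to $\mu^\theta$, one has $\ent(f^2)=\ent_{\tnu^\theta}(g^2)$; and since $\partial_{y_i}g=y_i\,(\partial_i f)\circ\varphi$, one gets $(\partial_{y_i}g)^2=2x_i(\partial_i f)^2$, so $\int\sum_i x_i(\partial_i f)^2\,d\mu^\theta=\tfrac{1}{2}\int|\nabla g|^2\,d\tnu^\theta$. Therefore \eqref{logsobo} is equivalent to the ordinary logarithmic Sobolev inequality $\ent_{\tnu^\theta}(g^2)\le 2\int|\nabla g|^2\,d\tnu^\theta$.

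Next I would write $\tnu^\theta\propto e^{-W}$ on the convex domain $(0,\infty)^n$, where $W(y)=\sum_{i=1}^n\bigl(\tfrac{1}{2}y_i^2-(2\theta-1)\log y_i\bigr)$; its Hessian is the diagonal matrix with entries $1+(2\theta-1)y_i^{-2}$, which for $\theta\ge 1/2$ dominates $I_n$ on $(0,\infty)^n$. By the Bakry--\'Emery criterion, a measure $\propto e^{-W}$ with $\mathrm{Hess}\,W\succeq\rho\,I$, $\rho>0$, on a convex domain satisfies the ordinary LSI with constant $2/\rho$; here $\rho=1$, which is exactly the inequality from the first step. I would run this first for smooth compactly supported test functions and then extend to all $f\in C^1$ with both sides of \eqref{logsobo} finite, by truncation and mollification. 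Equivalently, and without the substitution: $\mu^\theta$ is the reversible law of a product of $n$ Laguerre diffusions with one-dimensional generators $x\partial_x^2+(\theta-x)\partial_x$, whose carr\'e du champ is $\sum_i x_i(\partial_i f)^2$; a short computation of the iterated carr\'e du champ yields the curvature--dimension bound $\mathrm{CD}(1/2,\infty)$ for $\theta\ge 1/2$, and the Bakry--\'Emery theorem then gives \eqref{logsobo} with constant $2/(1/2)=4$.

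The only real point to watch is the curvature threshold: the bound $\mathrm{Hess}\,W\succeq I$ --- equivalently, curvature at least $1/2$ for the Laguerre diffusion --- holds exactly when $\theta\ge 1/2$, which is all that is needed here, since every Gamma parameter occurring in this paper is at least $1$. A minor secondary issue is the boundary of $(0,\infty)^n$: when $\theta>1/2$ the density $\tnu^\theta$ degenerates at the boundary, so no boundary term appears, while for $\theta=1/2$ one invokes the form of the Bakry--\'Emery criterion valid for the reflected dynamics on a convex domain. For $0<\theta<1/2$ the $\Gamma_2$ calculus no longer yields a positive curvature lower bound and a separate argument, with a priori a $\theta$-dependent constant, would be required; this regime does not arise.
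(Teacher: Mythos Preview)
The paper does not prove this lemma; it is quoted directly from \cite{BW}. Your argument via the substitution $x_i=y_i^2/2$ and the Bakry--\'Emery criterion on the resulting uniformly log-concave measure is correct for $\theta\ge 1/2$, and is essentially the argument behind the result in \cite{BW} (there phrased, as in your alternative formulation, in terms of the Laguerre semigroup and its $\mathrm{CD}(1/2,\infty)$ bound). Your caveat about the range $0<\theta<1/2$ is accurate and, as you note, irrelevant for the applications here, where every Gamma parameter is at least $1$.
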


%le12 #&#
\begin{lemma}\label{expconcen}
Consider the set-up in Lemma~\ref{lem:logsobo}. Let $F$ be a function
on the open positive quadrant (i.e., every $x_i>0$) which is $C^1$ and satisfies
%
%e37 #&#
\begin{equation}\label{derivbnd}
\sum_{i=1}^n x_i ( \partial_i F )^2 \le F.
\end{equation}
Then the following concentration estimate holds for any $r >0$:
\begin{eqnarray*}
\mu^\theta\bigl( \sqrt{F} - E_\theta\sqrt{F} \ge r \bigr) &\le
\exp( -r^2 ),\qquad\mu^\theta\bigl( \sqrt{F} -
E_\theta\sqrt{F} \le- r \bigr) \le\exp( -r^2 ),
\end{eqnarray*}
where $E_{\theta}\sqrt F = \int\sqrt{F}\,d\mu^\theta$.
\end{lemma}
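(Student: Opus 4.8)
The plan is to deduce Lemma \ref{expconcen} from the logarithmic Sobolev inequality of Lemma \ref{lem:logsobo} via the classical Herbst argument. Set $g = \sqrt{F}$, which is $C^1$ on the open positive quadrant, and compute $\partial_i g = \partial_i F/(2\sqrt F)$, so that $\sum_i x_i(\partial_i g)^2 = \frac{1}{4F}\sum_i x_i(\partial_i F)^2 \le \frac14$ by the hypothesis \eqref{derivbnd}. Thus $g$ has ``gradient bounded by $1/2$'' in the relevant weighted sense, and the whole point is that such functions satisfy sub-Gaussian concentration with variance proxy governed by the log-Sobolev constant $4$ appearing in \eqref{logsobo}.

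Concretely, I would apply \eqref{logsobo} to $f = e^{\lambda g/2}$ for $\lambda \in \rr$. Then $\partial_i f = \frac{\lambda}{2}(\partial_i g) f$, so $\sum_i x_i(\partial_i f)^2 = \frac{\lambda^2}{4} f^2 \sum_i x_i (\partial_i g)^2 \le \frac{\lambda^2}{16} f^2$. Writing $H(\lambda) = \int e^{\lambda g}\,d\mu^\theta = \int f^2\,d\mu^\theta$ for the moment generating function of $g$ (finite, since $g$ grows at most like $\sqrt{x_0}$ while the $\mu^\theta$-tails are exponential), the log-Sobolev inequality becomes
\[
\lambda H'(\lambda) - H(\lambda)\log H(\lambda) = \ent(f^2) \le 4\cdot\frac{\lambda^2}{16} H(\lambda) = \frac{\lambda^2}{4} H(\lambda).
\]
This is the standard Herbst differential inequality: dividing by $\lambda^2 H(\lambda)$ shows that $\frac{d}{d\lambda}\left(\frac{\log H(\lambda)}{\lambda}\right) \le \frac14$, and since $\frac{\log H(\lambda)}{\lambda} \to E_\theta g$ as $\lambda \to 0$, integrating from $0$ to $\lambda > 0$ gives $\log H(\lambda) \le \lambda E_\theta g + \frac{\lambda^2}{4}$, i.e.
\[
\int e^{\lambda(g - E_\theta g)}\,d\mu^\theta \le e^{\lambda^2/4}, \qquad \lambda \ge 0,
\]
and the same bound for $\lambda \le 0$ by the analogous integration. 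Then Markov's inequality applied to $e^{\lambda(g - E_\theta g)}$ yields, for any $r > 0$, $\mu^\theta(g - E_\theta g \ge r) \le e^{-\lambda r + \lambda^2/4}$, and optimizing over $\lambda$ (take $\lambda = 2r$) gives $\mu^\theta(g - E_\theta g \ge r) \le e^{-r^2}$; the lower-tail bound follows identically using $\lambda < 0$. Recalling $g = \sqrt F$ finishes the proof.

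The only genuine subtlety — and the step I expect to require the most care — is the justification of differentiating under the integral sign and the finiteness of $H(\lambda)$ and $H'(\lambda)$ for all real $\lambda$: one needs that $g = \sqrt F$ has at most square-root growth so that $e^{\lambda g}$ is $\mu^\theta$-integrable (the Gamma marginals have full exponential moments, so $e^{c\sqrt{x_0}}$ is integrable for every $c$), and one needs enough regularity/domination to legitimately plug $f = e^{\lambda g/2}$ into \eqref{logsobo}, which nominally requires $f \in C^1$; this holds since $g$ is $C^1$ on the open quadrant, and the boundary (where some $x_i = 0$) is $\mu^\theta$-null. A truncation argument (apply the inequality to a smooth compactly-supported modification of $g$ and pass to the limit by monotone/dominated convergence, using the growth bound) handles any remaining integrability concerns cleanly. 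Everything else is the routine Herbst computation.
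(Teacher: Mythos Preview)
Your proposal is correct and takes essentially the same approach as the paper: observe that \eqref{derivbnd} forces $4\sum_i x_i(\partial_i\sqrt{F})^2\le 1$, then apply the Herbst argument to $\sqrt{F}$ using the log-Sobolev inequality \eqref{logsobo}. The paper's own proof is in fact just two sentences invoking Herbst by citation, so your write-up is more detailed (and more careful about integrability) than what appears there, but the underlying idea is identical.
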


\begin{pf} Condition \eqref{derivbnd} implies that $4\sum_{i=1}^n
x_i ( \partial_i \sqrt{F} )^2 \le1$.
Hence, from the classical Herbst argument (e.g., the monograph by
Ledoux~\cite{L}), with a gradient defined by the right-hand side of
\eqref{logsobo}, we get
\[
\mu^\theta\bigl( \sqrt{F} -E_\theta\sqrt{F} > r \bigr) \le\exp
(- r^2 ).
\]
Here $\mu^\theta(\sqrt{F})$ is the expectation of $\sqrt{F}$ under
$\mu^\theta$. Repeating the argument with $-\sqrt{F}$ instead of
$\sqrt{F}$, we get the result.
\end{pf}

%We follow a line of argument similar to the classical Herbst argument.
%The classical argument can be found in, for example, the monograph by
%Ledoux~\cite{L}.
%
%Let $F$ be as in the statement of the theorem and apply Lemma \ref
%{lem:logsobo} to
%%
%f= e^{\lambda F/2}, \quad\lambda< 1.
%%
%Note that
%%
%^2}{4} \sum_{i=1}^n x_i ( \partial_i F )^2 e^{\lambda F}
%%
%Define $H(\lambda) = E_{\theta}(e^{\lambda F})$, $\lambda\in(0,1)$.
%Apply inequality \eqref{logsobo} we get
%%
%
%Define $K(\lambda)= \frac{1}{\lambda}\log H(\lambda)$, with $K(0)=
%E_\theta(F)$. we get
%%
%%
%K'(\lambda)= \frac{H'(\lambda)}{\lambda H(\lambda)} - \frac{\log
%H(\lambda)}{\lambda^2}.
%%
%%
%Thus
%%
%%
%H'(\lambda).\\
% ( \lambda K(\lambda) )'= K(\lambda) + \lambda K'(\lambda
%)\\
%{1}{1-\lambda}, \quad\mbox{or}\quad( \log K(\lambda)
%)' \le\frac{1}{1-\lambda}.
%%
%%
%Therefore
%%
%%
%&= \log E_\theta(F) - \log(1-\lambda).
%%
%%
%Exponentiating both sides we get
%%
%%
%K(\lambda) &\le(1-\lambda)^{-1}E_\theta(F)\\
%}{1-\lambda} E_\theta(F) ).
%%
%
%Now, for any $r > 1$, we apply Chebyshev inequality to obtain
%%
%%
%( e^{\lambda F} \ge e^{\lambda r E_\theta(F)} )\le e^{- \lambda
%r E_\theta(F)} H(\lambda)\\
%&\le\exp( ( \frac{\lambda}{1-\lambda} -\lambda r
% ) E_\theta F ).
%%
%
%We minimize the right side over $0 < \lambda< 1$ for a fixed $r$. That
%is, let
%%
%{then} \psi'(\lambda)= \frac{1}{(1-\lambda)^2} - r.
%%
%The minimum is attained at $\lambda=1-1/\sqrt{r}$ for which we get
%%
%{1}{\sqrt{r}} ) - (1- \frac{1}{\sqrt{r}}
%)r=2\sqrt{r}-1-r=-(\sqrt{r}-1)^2.
%%
%Thus
%%
%-(r-1)^2 E_\theta(F) ), \mbox{for any} r > 1.
%%
%}

%th13 #&#
\begin{theorem}\label{tauconcen}
The random variable $\chi=\max_i G_i$, where $G_i$'s are i.i.d.
$\operatorname{Gamma}(\theta)$ satisfies the following concentration estimate:
%
%e38 #&#
\begin{equation}\label{eq:chicon}
P \bigl( \sqrt{\chi} > E\bigl(\sqrt{\chi}\bigr) + r \bigr) \le e^{-r^2}
\qquad\mbox{for all } r > 0.
\end{equation}
\end{theorem}

\begin{pf}%{Proof of Theorem~\ref{tauconcen}}
To prove \eqref{eq:chicon} we start by noting that Lemma \ref
{expconcen} is satisfied by the family of $\mathbb{L}^k$-norms, $\{
F_{k}, k > 1 \}$, defined by
\[
F_k(x)= \Biggl(\sum_{i=1}^n x_i^k \Biggr)^{1/k}.
\]
This is because each $F_k$ is smooth (when every $x_i$ is positive) and
%
%e39 #&#
\begin{equation}\label{ineqcheck}
\sum_{i=1}^n x_i (\partial_i F_{k}(x) )^2  = \sum
_{i=1}^n x_i \biggl[ \frac{x_i^{k-1}}{ (\sum_{j=1}^n x_j^k
)^{1-1/k}} \biggr]^2=\frac{\sum_{i=1}^n x_i^{2k-1}}{ (
\sum_{j=1}^n x_j^k )^{2-2/k}}.
\end{equation}
Since, for any nonnegative $y_1, y_2, \ldots, y_n$ and any $\beta
>1$, one has
\[
\sum_{i=1}^n y_i^{\beta} \le\Biggl( \sum_{i=1}^n y_i \Biggr)^\beta,
\]
applying it for $y_i= x_i^k$ and $\beta= 2-1/k$, we get
\[
\sum_{i=1}^n x_i^{2k-1} \le\Biggl( \sum_{i=1}^n x_i^{k} \Biggr)^{2-1/k}.
\]
Combining the above with \eqref{ineqcheck}, we get
\[
\sum_{i=1}^n x_i (\partial_i F_{k}(x) )^2 \le\Biggl(\sum
_{i=1}^n x_i^k \Biggr)^{1/k}=F_k(x).
\]
Thus $F_{k}$ satisfies condition \eqref{derivbnd}.

Since $F_k$ converges pointwise to $\max_i x_i$ as $k$ tends to
infinity, by applying DCT, Lemma~\ref{expconcen} is true for the
function $\max_i G_i$. This proves \eqref{eq:chicon}.
\end{pf}

%To prove \eqref{eq:chicon2} we start by noting that (see, for example,
%Gamma$(\theta)$ distributions satisfies the following Poincar\'e
%inequality
%%
%Var ( f(G_1, \ldots, G_n) ) \le12\theta E ( \abs
%{\nabla f}^2(G_1, \ldots, G_n) ).
%%
%One can take $f(x_1, \ldots, x_n)=\max_i x_i$, for which $\abs
%{\nabla f}^2$ is defined almost surely and is equal to one. The bound
%
%The claim \eqref{eq:taucon} is obtained as a corollary by changing the
%variable from $\chi$ to $\tau$.
%}

Our next step will be to prove estimate on the quantity $\sigma_0$ in
\eqref{whatissigma0}. The process $\zeta(s)$ is non-Markovian and not
distributed as $Q^{-n\theta}$. However, on an possibly enlarged sample
space, one can create a $Q^{-n\theta}$ process $\tilde\zeta$, such
that the paths of $\zeta$ and $\tilde\zeta$ are indistinguishable
until $\sigma_0$. This is possible by considering the SDE solved by~$\zeta$,
\[
\zeta(t)= n - n \theta t + \int_0^t \sqrt{\zeta(s)}\,dW(s), \qquad t
< \sigma_0.
\]
To extend the process beyond $\sigma_0$, one concatenates an
independent Brownian motion $\widetilde W$ and defines
\[
\beta(t) =
\cases{\displaystyle
W(t) ,&\quad $t \le\sigma_0$,\cr\displaystyle
W(t) + \widetilde W(t-\sigma_0) ,&\quad $t > \sigma_0$.
}
\]
Then $\beta$ is a Brownian motion in the enlarged filtration. Since
$Q^{-n\theta}$ admits a strong solution, the process
%
%e40 #&#
\begin{equation}\label{extendzeta}
\tilde\zeta(t) = n - n \theta t + 2\int_0^t \sqrt{\tilde\zeta(s)}\,d\widetilde W(s),\qquad t < T_0,
\end{equation}
has law $Q^{-n\theta}$ and pathwise indistinguishable from $\zeta$
until time $\sigma_0$. Thus in the following discussion we will treat
as if $\zeta$ itself is distributed as $Q^{-n\theta}$, keeping in
mind the above construction.

\begin{theorem}\label{thm:exittime}
Let $\mu$ be distributed as an $n$-dimensional $\operatorname{NWF}(\delta, \delta,
\ldots, \delta)$ starting from the point $(1/n, 1/n,\ldots, 1/n)$.
Let $\sigma_0$ be the first time that any of the coordinates of $\mu$
hit zero. Let
\[
a_n = E \max_{1\le i \le n} \sqrt{G_i}, \qquad G_i \stackrel{\mathrm{i.i.d.}}{\sim}  \operatorname{Gamma}(\delta+1).
\]
Then, $a_n = \Theta(\sqrt{\log n})$, $\sigma_0$ has the law given by
\eqref{whatissigma0} where $\zeta$ is distributed as $Q^{-2n\delta
}_1$, and $\tau$ is a random time.

Moreover, for any $r > 0$, we get
\begin{eqnarray*}
P \biggl( \frac{1}{n (a_n + r)} \le\sqrt{2\tau} \le\frac
{1}{n(a_n+r)} \biggr) \ge1 - 2 e^{-r^2}.
\end{eqnarray*}
\end{theorem}

%re1 #&#
\begin{rmk}
It is impossible to provide a simple description of the exact
distribution of $\sigma_0$, due to the distributional dependence of
$\zeta$ and $\tau$. The above theorem shows that $\tau$ is about a
constant, and one can compare the distribution of $\sigma_0$ with that
of $4\int_0^{\cdot} du/\zeta(u)$, where the upper limit of the
integral is a constant. Limiting large deviation behavior of such
integrals, it is possible to derive by methods as in~\cite{YZ}.
\end{rmk}

\begin{pf*}{Proof of Theorem~\ref{thm:exittime}} The proof is
obvious from Lemma~\ref{tauconcen} and expression \eqref{whatistau}.
\end{pf*}

%apA #&#
\begin{appendix}\label{appm}
\section*{Appendix: Proofs of properties of BESQ processes}
\begin{pf*}{Proof of Lemma~\ref{scale}} We use Exercise 3.20 in
\cite{RY}, page~311. The scale function for $Q^{\theta}$ for $\theta
\ge0$ is well known to be $x^{-\theta/2+1}$ (see~\cite{RY}, page~443). Nearly identical calculations lead to the case when
$\theta$ is replaced by $-\theta$, and we obtain the scale function
$s(x)= x^{\theta/2+1}$.

The speed measure is the measure with the density
\[
m'(x)=\frac{2}{s'(x)4x}=\frac{1}{2(\theta/2+1)}x^{-\theta/2-1}.
\]
We now use Feller's criterion to check if the origin is an entry and/or
exit point (see~\cite{itomckean}, page~108). Note that
%
%e41 #&#
\begin{eqnarray}\qquad
m(\xi,1/2)&=&\frac{1}{2(\theta/2+1)}\int_{\xi}^{1/2}x^{-\theta
/2-1}\,dx=\frac{1}{\theta(\theta/2+1)} ( \xi^{-\theta/2} -
2^{\theta/2} ),\nonumber
\\[-8pt]
\\[-8pt]
m(0,\xi]&=&\infty  \qquad\mbox{for all positive } \xi.
\nonumber
\end{eqnarray}
Thus
\[
\int_0^{1/2} m(\xi,1/2] s(d\xi) < \infty  \quad\mbox{and}\quad
\int_0^{1/2} m(0,\xi] s(d\xi)=\infty.
\]
This proves that the origin is an exit and not an entry.

Finally, to obtain part (ii) we apply Girsanov's theorem~\cite{RY}, page~327. Let $X$ satisfy the SDE $d X(t)= -\theta \,dt + 2\sqrt
{X(t)}\,d\beta(t)$; then we take $D(t)=X^{\theta/2+1}(t)$ (without the
normalization, for simplicity) and apply Girsanov. Under the changed
measure, there is a standard Brownian motion $\beta^*$, such that
\begin{eqnarray*}
\beta(t) &=& \beta^*(t) + \int_0^t X^{-\theta/2-1}(s)\,d\langle
\beta , D \rangle_s\\
&=&\beta^*(t) + \int_0^t X^{-\theta/2-1}(s) ( \theta+ 2
) X^{\theta/2+1/2}(s)\,ds\\
&=&\beta^*(t) + ( \theta+ 2 )
\int_0^t X^{-1/2}(s)\,ds.
\nonumber
\end{eqnarray*}

Thus under the changed measure,
\begin{eqnarray*}
dX(t) &=& -\theta\, dt + 2 X^{1/2}(t)\,d\beta(t) = -\theta\, dt + 2(\theta
+2)\,dt + d\beta^*(t) \\
&=& (\theta+4)\,dt + d \beta^*(t).
\end{eqnarray*}
The interpretation as the conditional distribution is classical (see
\cite{besselbridge}).
\end{pf*}

\begin{pf*}{Proof of Lemma~\ref{logct}}
For the assertion it is enough to take $t=1$. Note that, under
$Q_0^{\theta}$, the coordinate process satisfies time-inversion; that
is, the process $\{ t^2 Z(1/t), t\ge0\}$ has law $Q_0^\theta$.
Thus, for $0<\epsilon< 1$, if we define
\[
U_{\epsilon}= \int_{\epsilon}^1 \frac{du}{Z(u)}= \int
_{1}^{1/\epsilon} \frac{dt}{t^2 Z(1/t)},
\]
then $U_\epsilon$ has the same law as $C_{1/\epsilon} - C_1 = \int
_{1}^{1/\epsilon}\,du/Z(u)$.
Thus, by~\cite{YZ}, Theorem~1.1, we get $\lim_{\epsilon\rightarrow
0} U_{\epsilon}/\log(1/\epsilon) = (\theta- 2)^{-1}$ almost surely.
\end{pf*}
\end{appendix}

\section*{Acknowledgments}
I thank David Aldous, Zhen-Qing Chen, Michel Le\-doux and Jon Wellner for
very useful discussions.
I thank the anonymous referee for a thorough review which led to a
significant improvement of the article.

%suskaldyti doi

% imsref loaded by smiklovaite, 2012-03-01 12:23:35
%

\printaddresses


\begin{thebibliography}{18}
% BibTex style file: ims.bst, 2011-05-30
% Default style options (sort=0,type=number).
% Used options (sort=1,type=number).

%b1 #&#
\bibitem{A93}
%
\begin{barticle}[mr]
\bauthor{\bsnm{Aldous},~\bfnm{David}\binits{D.}}
(\byear{1993}).
\btitle{The continuum random tree. {III}}.
\bjournal{Ann. Probab.}
\bvolume{21}
\bpages{248--289}.
\bid{issn={0091-1798}, mr={1207226}}
\bptok{imsref}%
\end{barticle}
%
\endbibitem

%b2 #&#
\bibitem{AOP}
%
\begin{bmisc}[auto:STB|2012/02/29|12:31:17]
\bauthor{\bsnm{Aldous},~\bfnm{D.}\binits{D.}}
(\byear{1999}).
\bhowpublished{Wright--Fisher diffusions with negative mutation rate!
Available at
\url{http://www.stat.berkeley.edu/\textasciitilde aldous/Research/OP/fw.html}}.
\bptok{imsref}%
\end{bmisc}
%
\endbibitem

%b3 #&#
\bibitem{A00}
%
\begin{barticle}[mr]
\bauthor{\bsnm{Aldous},~\bfnm{David~J.}\binits{D.~J.}}
(\byear{2000}).
\btitle{Mixing time for a {M}arkov chain on cladograms}.
\bjournal{Combin. Probab. Comput.}
\bvolume{9}
\bpages{191--204}.
\bid{doi={10.1017/S096354830000417X}, issn={0963-5483}, mr={1774749}}
\bptok{imsref}%
\end{barticle}
%
\endbibitem

%b4 #&#
\bibitem{BEG}
%
\begin{barticle}[mr]
\bauthor{\bsnm{Barbour},~\bfnm{A.~D.}\binits{A.~D.}},
\bauthor{\bsnm{Ethier},~\bfnm{S.~N.}\binits{S.~N.}} \AND
\bauthor{\bsnm{Griffiths},~\bfnm{R.~C.}\binits{R.~C.}}
(\byear{2000}).
\btitle{A transition function expansion for a diffusion model with selection}.
\bjournal{Ann. Appl. Probab.}
\bvolume{10}
\bpages{123--162}.
\bid{doi={10.1214/aoap/1019737667}, issn={1050-5164}, mr={1765206}}
\bptok{imsref}%
\end{barticle}
%
\endbibitem

%b5 #&#
\bibitem{BW}
%
\begin{barticle}[mr]
\bauthor{\bsnm{Barthe},~\bfnm{F.}\binits{F.}} \AND
\bauthor{\bsnm{Wolff},~\bfnm{P.}\binits{P.}}
(\byear{2009}).
\btitle{Remarks on non-interacting conservative spin systems: The case
of gamma
distributions}.
\bjournal{Stochastic Process. Appl.}
\bvolume{119}
\bpages{2711--2723}.
\bid{doi={10.1016/j.spa.2009.02.004}, issn={0304-4149}, mr={2532220}}
\bptok{imsref}%
\end{barticle}
%
\endbibitem

%b6 #&#
\bibitem{durrettgenetics}
%
\begin{bbook}[mr]
\bauthor{\bsnm{Durrett},~\bfnm{Richard}\binits{R.}}
(\byear{2008}).
\btitle{Probability Models for {DNA} Sequence Evolution},
\bedition{2nd} ed.
\bpublisher{Springer}, \baddress{New York}.
\bid{mr={2439767}}
\bptok{imsref}%
\end{bbook}
%
\endbibitem

%b7 #&#
\bibitem{yornbesq}
%
\begin{barticle}[mr]
\bauthor{\bsnm{G{\"o}ing-Jaeschke},~\bfnm{Anja}\binits{A.}} \AND
\bauthor{\bsnm{Yor},~\bfnm{Marc}\binits{M.}}
(\byear{2003}).
\btitle{A survey and some generalizations of {B}essel processes}.
\bjournal{Bernoulli}
\bvolume{9}
\bpages{313--349}.
\bid{doi={10.3150/bj/1068128980}, issn={1350-7265}, mr={1997032}}
\bptok{imsref}%
\end{barticle}
%
\endbibitem

%b8 #&#
\bibitem{griffiths79b}
%
\begin{barticle}[mr]
\bauthor{\bsnm{Griffiths},~\bfnm{R.~C}\binits{R.~C.}}
(\byear{1979}).
\btitle{A transition density expansion for a multi-allele diffusion model}.
\bjournal{Adv. in Appl. Probab.}
\bvolume{11}
\bpages{310--325}.
\bid{doi={10.2307/1426842}, issn={0001-8678}, mr={0526415}}
\bptok{imsref}%
\end{barticle}
%
\endbibitem

%b9 #&#
\bibitem{itomckean}
%
\begin{bbook}[mr]
\bauthor{\bsnm{It{\^o}},~\bfnm{Kiyosi}\binits{K.}} \AND
\bauthor{\bsnm{McKean},~\bfnm{Henry~P.}\binits{H.~P.} \bsuffix{Jr.}}
(\byear{1974}).
\btitle{Diffusion Processes and Their Sample Paths}.
\bpublisher{Springer}, \baddress{Berlin}.
% Wissenschaften, Band 125}.
\bid{mr={0345224}}
\bptnote{check year}%
\bptok{imsref}%
\end{bbook}
%
\endbibitem

%b10 #&#
\bibitem{lambertAOP}
%
\begin{barticle}[mr]
\bauthor{\bsnm{Lambert},~\bfnm{Amaury}\binits{A.}}
(\byear{2010}).
\btitle{The contour of splitting trees is a {L}\'evy process}.
\bjournal{Ann. Probab.}
\bvolume{38}
\bpages{348--395}.
\bid{doi={10.1214/09-AOP485}, issn={0091-1798}, mr={2599603}}
\bptok{imsref}%
\end{barticle}
%
\endbibitem

%b11 #&#
\bibitem{L}
%
\begin{bbook}[mr]
\bauthor{\bsnm{Ledoux},~\bfnm{Michel}\binits{M.}}
(\byear{2001}).
\btitle{The Concentration of Measure Phenomenon}.
\bseries{Mathematical Surveys and Monographs}
\bvolume{89}.
\bpublisher{Amer. Math. Soc.}, \baddress{Providence, RI}.
\bid{mr={1849347}}
\bptnote{check year}%
\bptok{imsref}%
\end{bbook}
%
\endbibitem

%b12 #&#
\bibitem{vsm}
%
\begin{barticle}[mr]
\bauthor{\bsnm{Pal},~\bfnm{Soumik}\binits{S.}}
(\byear{2011}).
\btitle{Analysis of market weights under volatility-stabilized market models}.
\bjournal{Ann. Appl. Probab.}
\bvolume{21}
\bpages{1180--1213}.
\bid{doi={10.1214/10-AAP725}, issn={1050-5164}, mr={2830616}}
\bptnote{check year}%
\bptok{imsref}%
\end{barticle}
%
\endbibitem

%b13 #&#
\bibitem{palCT}
%
\begin{bmisc}[auto:STB|2012/02/29|12:31:17]
\bauthor{\bsnm{Pal},~\bfnm{S.}\binits{S.}}
(\byear{2011}).
\bhowpublished{On the Aldous diffusion on continuum trees. I.
Preprint. Available
at arXiv:\arxivurl{1104.4186v1}}.
\bptok{imsref}%
\end{bmisc}
%
\endbibitem

%b14 #&#
\bibitem{besselbridge}
%
\begin{barticle}[mr]
\bauthor{\bsnm{Pitman},~\bfnm{Jim}\binits{J.}} \AND
\bauthor{\bsnm{Yor},~\bfnm{Marc}\binits{M.}}
(\byear{1982}).
\btitle{A decomposition of {B}essel bridges}.
\bjournal{Z. Wahrsch. Verw. Gebiete}
\bvolume{59}
\bpages{425--457}.
\bid{doi={10.1007/BF00532802}, issn={0044-3719}, mr={0656509}}
\bptok{imsref}%
\end{barticle}
%
\endbibitem

%b15 #&#
\bibitem{RY}
%
\begin{bbook}[mr]
\bauthor{\bsnm{Revuz},~\bfnm{Daniel}\binits{D.}} \AND
\bauthor{\bsnm{Yor},~\bfnm{Marc}\binits{M.}}
(\byear{1999}).
\btitle{Continuous Martingales and {B}rownian Motion},
\bedition{3rd} ed.
\bseries{Grundlehren der Mathematischen Wissenschaften [Fundamental Principles
of Mathematical Sciences]}
\bvolume{293}.
\bpublisher{Springer}, \baddress{Berlin}.
\bid{mr={1725357}}
\bptok{imsref}%
\end{bbook}
%
\endbibitem

%b16 #&#
\bibitem{S}
%
\begin{barticle}[mr]
\bauthor{\bsnm{Schweinsberg},~\bfnm{Jason}\binits{J.}}
(\byear{2002}).
\btitle{An {$O(n\sp2)$} bound for the relaxation time of a {M}arkov
chain on
cladograms}.
\bjournal{Random Structures Algorithms}
\bvolume{20}
\bpages{59--70}.
\bid{doi={10.1002/rsa.1029}, issn={1042-9832}, mr={1871950}}
\bptnote{check year}%
\bptok{imsref}%
\end{barticle}
%
\endbibitem

%b17 #&#
\bibitem{YZ}
%
\begin{barticle}[mr]
\bauthor{\bsnm{Yor},~\bfnm{Marc}\binits{M.}} \AND
\bauthor{\bsnm{Zani},~\bfnm{Marguerite}\binits{M.}}
(\byear{2001}).
\btitle{Large deviations for the {B}essel clock}.
\bjournal{Bernoulli}
\bvolume{7}
\bpages{351--362}.
\bid{doi={10.2307/3318743}, issn={1350-7265}, mr={1828510}}
\bptok{imsref}%
\end{barticle}
%
\endbibitem

\end{thebibliography}
\end{document}